\definecolor{b}{HTML}{4472c4}
\definecolor{o}{HTML}{ED7D31}
\definecolor{g}{HTML}{70ad47}
\definecolor{t}{RGB}{0,0,255}
\newcommand\co[1]{{\color{o}#1}}
\date{}
\newtheorem{theor}{Theorem}[section]
\newtheorem*{theorem*}{Theorem}
\newtheorem{lemma}[theor]{Lemma}
\newtheorem{cor}[theor]{Corollary}
\newtheorem{defi}[theor]{Definition}
\newtheorem{prop}[theor]{Proposition}
\theoremstyle{definition}
\theoremstyle{plain}
\newcommand{\N}{\mathbb{N}}
\newcommand{\R}{\mathbb{R}}
\def\Prob{{\mathbb P}}
\def\Reg{{\mathcal R}}
\def\Rel{{\mathscr{R}}}
\newcommand{\imperfect}{\mathcal{I}_{n,d}}
\newcommand{\tupleset}{\mathcal{H}}
\newcommand{\weight}{\mathcal{W}}
\newcommand{\length}[1]{{\rm len}(#1)}
\def\dist{{\rm dist}}
\def\path{{\mathcal P}}
\def\tuple{{\mathcal T}}
\def\mult{{\rm mult\,}}
\def\cconst{{\mathfrak m}}
\title{Regularized modified log-Sobolev inequalities, and comparison of Markov chains}
\author{
Konstantin Tikhomirov
\and
Pierre Youssef
}
\thanks{K.T. is partially supported by the Sloan Research Fellowship
and by the NSF grant DMS 2054666}
\address{
\medskip
\noindent
Konstantin Tikhomirov,
School~of Math.,
GeorgiaTech,
686 Cherry street,
Atlanta, GA 30332.\\
\texttt{\small
e-mail:   ktikhomirov6@gatech.edu}
}
\address{
\medskip
\noindent
Pierre Youssef, 
Division of Science, NYU Abu Dhabi, Saadiyat Island, Abu Dhabi, UAE \& Courant
Institute of Mathematical Sciences, New York University, 251 Mercer st, New York,
NY 10012, USA.\\
\texttt{\small
e-mail:  yp27@nyu.edu}}
\def\R{{\mathbb R}}
\def\N{{\mathbb N}}
\newcommand{\Perfmatch}{\mathcal{C}_{n,d}}
\newcommand{\Ugly}{\mathcal{U}_{n,d}}
\def\BipGSet{\Omega^B}
\def\ConfBipGSet{\Omega^{B\,C}}
\def\SNeigh{{\mathcal S\mathcal N}}
\def\categ{{\rm Cat}_{n,d}}
\def\tmix{{t_{mix}}}
\def\Prob{{\mathbb P}}
\def\Exp{{\mathbb E}}
\def\Dir{{\mathcal E}}
\newcommand{\Ent}{{\rm Ent}}
\newcommand{\Var}{{\rm Var}}
\newcommand{\Psimple}{\pi_{u}}
\newcommand{\Pconfig}{\pi_{BC}}
\def\neigh{{\mathcal N}}
\def\dist{{\rm dist}}
\def\cov{{\rm Cov}}
\begin{document}

\maketitle

\begin{abstract}
In this work, we develop a comparison procedure for the Modified log-Sobolev Inequality
(MLSI) constants 
of two reversible Markov chains on a finite state space. 
Efficient comparison of the MLSI Dirichlet forms
is a well known obstacle in the theory of Markov chains.
We approach this problem by introducing a {\it regularized} MLSI
constant
which, under some assumptions, has the same order of magnitude
as the usual MLSI constant yet
is amenable for comparison and thus considerably simpler to estimate in certain cases.
As an application of this general comparison procedure, we
provide a sharp estimate of the MLSI constant 
of the switch chain on the the set of simple bipartite regular graphs of size $n$ with a fixed degree $d$. Our estimate 
implies that the total variation mixing time of the switch chain is
of order $O_d(n\log n)$. The result is optimal up to a multiple
depending on $d$ and resolves a long-standing open problem. 
We expect that the MLSI comparison technique
implemented in this paper will find further applications.
\end{abstract}

\section{Introduction}

Let $\Omega$ be a finite state space, and let $Q$ be a Markov generator of a reversible chain on $\Omega$
with a stationary distribution $\pi$. 
We say that 
$(\Omega, \pi, Q)$ satisfies a {\it Modified Logarithmic Sobolev Inequality (MLSI)} with a constant $\alpha$
if for any function $f:\, \Omega\to (0,\infty)$ we have
$$
{\rm Ent}_\pi(f):= \Exp_\pi\big[f(\log f- \log \Exp_\pi f)\big] \leq \alpha\, \Dir_\pi(\log f,f),
$$
where $\Dir_\pi(\log f,f)= \frac12\sum_{\omega,\omega'\in \Omega} \pi(\omega) Q(\omega, \omega') \big(f(\omega)-f(\omega')\big)\log\frac{f(\omega)}{f(\omega')}$ 
is the corresponding {\it Dirichlet form}\footnote{Often, the MLSI constant is defined as inverse of the above; however, we prefer to use the given definition.}. We refer to the smallest $\alpha$ in the above inequality as the MLSI constant and denote it by $\alpha_{MLSI}(Q)$.

Similarly to the {\it log-Sobolev inequality} ${\rm Ent}_\pi(f^2)
\leq \alpha_{LSI}\, \Dir_\pi(f,f)$, the MLSI
is known to imply sub-Gaussian 
concentration via the Herbst argument (see, for example, \cite[Chapter~5]{ledoux}). Moreover, it constitutes a powerful 
tool allowing to capture the mixing time of the underlying Markov chain (see \cite{bobkov-tetali}). More precisely, for every $\varepsilon \in (0,1)$,
\begin{equation}\label{eq: mix-mlsi}
\tmix(Q,\varepsilon)\leq  \alpha_{MLSI}(Q)\big( \log\log \frac{1}{\pi_{\min}}+\log\frac{1}{2\varepsilon^2}\big),
\end{equation}
where $\tmix(Q,\varepsilon)$ denotes the total variation $\varepsilon$-mixing time of $Q$ and $\pi_{\min}=\min_{x\in \Omega} \pi(x)$.
While sharing similar properties with the log-Sobolev inequality, the MLSI
often holds with a much smaller
constant than the log-Sobolev inequality,
and thus allows to get stronger
concentration and mixing estimates. 

Estimating the 
relaxation time or the log-Sobolev constant of a Markov chain by comparing
it with another random process is a well developed technique 
which has been
successfully used in a variety of situations 
(see for instance \cite{DS-Comparison, DS-Comparison2,saloff,Dyer}
as well as a recent paper \cite{TY2020+} by the authors for details).  
The main idea is that 
when the stationary measures of two Markov chains
are 
``close'' to each other, 
the Poincar\'e and log-Sobolev constants
of the chains can be related by 
comparing the corresponding 
Dirichlet forms of the two chains. The {\it canonical path} (or the {\it flow})
method \cite{Sinclair} aims at providing 
an efficient relation between the Dirichlet forms. 
While this comparison procedure has been widely used
to obtain bounds on the Poincar\'e and the log-Sobolev constants, the case 
of the MLSI constant turns out to be fundamentally different. 
Indeed, let $(\pi, Q)$ and $(\tilde \pi, \tilde Q)$ be
two reversible irreducible Markov generators on a finite state space $\Omega$.
It is known (see \cite[Chapter~4]{saloff}) that
there exists a constant $C$ (depending on $Q$, $\tilde Q$)
such that for any $f:\, \Omega\to \R_+$ one has 
$\tilde \Dir(f,f)\leq C\, \Dir(f,f)$ where $\tilde \Dir$ (resp. $\Dir$) denotes the Dirichlet form associated with $(\tilde \pi, \tilde Q)$
(resp. $(\pi, Q)$). 
On the other hand, under the same assumptions, there {\it does not}
in general exist a constant $C$ such that for all $f:\, \Omega\to \R_+$ one has
$\tilde \Dir(f,\log f)\leq C\, \Dir(f,\log f)$ (see \cite[Page~74]{G} for a counter-example).

In this paper, we develop a comparison procedure for the MLSI constants
based on a notion of a {\it regularized Modified log-Sobolev Inequality},
which is the MLSI restricted to a special class of functions.
Given a Markov chain on $\Omega$, we show that
the MLSI and its regularized version hold with constants having the same order
of magnitude. We then show that under certain assumptions the
Dirichlet forms of two Markov chains evaluated on the regular functions
can be efficiently compared.

  
Below, we provide a rigorous description of our method.   
Given a reversible Markov generator $Q$ on a 
state space $\Omega$ with a stationary distribution $\pi$, 
we equip $\Omega$ 
with the graph structure induced by $Q$, namely,
two distinct vertices $\omega, \omega'\in \Omega$ 
 are connected by an edge if and only if $Q(\omega,\omega')\neq 0$. 
 Given $r\geq 1$, define $\Reg(Q,r)$
as the collection of all functions $f:\Omega\to(0,\infty)$ such that
$$
f(\omega)/f(\omega')\leq r^{\dist(\omega,\omega')}\quad\mbox{for all vertices $\omega,\omega'$ of $\Omega$},
$$
where $\dist(\omega,\omega')$ is the usual graph distance between $\omega$ and $\omega'$. 
We call these functions {\it $r$--regular}. Note that if a function is $r$-regular then it is also $r'$-regular for any $r'\geq r$. 
Moreover, any positive function is $\infty$-regular while constant functions are $1$-regular. 

We say that $(\Omega, \pi, Q)$ satisfies the {\it $r$-regularized MLSI} with a constant $\alpha_{r}$ if for  
any function $f\in \Reg(Q,r)$ we have 
$$
{\rm Ent}_\pi(f)\leq \alpha_{r}\, \Dir_\pi(\log f,f).
$$
As before, we refer to the best constant in the above inequality as the $r$-regularized MLSI constant of $Q$.
Note that with our notations, the $\infty$-regularized MLSI is the ``usual''
Modified log-Sobolev inequality, and in view of the above, whenever
$(\Omega, \pi, Q)$ satisfies the ``usual'' MLSI,
it also satisfies the $r$-regularized MLSI with the same constant for any $r\geq 1$. 
Our first main result shows that there exists $1<r<\infty$ for which the reverse is true. 

\begin{theor}\label{th: MLSI=reg-MLSI}
Let $Q$ be a reversible Markov generator with a stationary measure $\pi$ on a finite state space $\Omega$. 
Define
\begin{equation}\label{eq: m 05623058}
\gamma:=\frac{\max_{\omega\in \Omega} \pi(\omega)}{\min_{\omega\in \Omega} \pi(\omega)}\quad \text{and}\quad 
\Upsilon:=\frac{16\gamma^2\max\limits_{\omega}|\{\omega'\neq \omega:\;Q(\omega,\omega')\neq 0\}|}
{\min\limits_{\omega\neq\omega': Q(\omega,\omega')\neq 0}Q(\omega,\omega')}.
\end{equation}
If $(\Omega, \pi, Q)$ satisfies the $\Upsilon$-regularized MLSI with a constant $\alpha_{\Upsilon}$, then 
$(\Omega, \mu, Q)$ satisfies MLSI with constant $C\alpha_{\Upsilon}$, where $C>0$ is a universal constant.
\end{theor}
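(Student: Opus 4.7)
The plan is to associate with every positive $f:\Omega\to(0,\infty)$ a canonical $\Upsilon$-regular approximant $\tilde f$ and to reduce the MLSI for $f$ to the assumed regularized MLSI for $\tilde f$ together with two comparison estimates. A natural candidate is the inf-convolution
$$
\tilde f(\omega):=\min_{\omega'\in\Omega} f(\omega')\,\Upsilon^{\dist(\omega,\omega')},
$$
which is $\Upsilon$-regular by construction, satisfies $\tilde f\leq f$, and coincides with $f$ at those $\omega$ for which the minimum in its definition is attained at $\omega'=\omega$. Granted these properties, it suffices to establish a Dirichlet comparison of the form $\Dir_\pi(\log\tilde f,\tilde f)\lesssim \Dir_\pi(\log f,f)$ and an entropy comparison bounding the defect $\Ent_\pi(f)-\Ent_\pi(\tilde f)$ by a constant multiple of $\alpha_\Upsilon\,\Dir_\pi(\log f,f)$; the conclusion will then follow by inserting the regularized MLSI on $\tilde f$.

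The Dirichlet comparison should be the easier ingredient. For any edge $(\omega,\omega')$ of $\Omega$, the definition of the inf-convolution forces at least one of $\tilde f(\omega),\tilde f(\omega')$ to coincide with the corresponding value of $f$ (otherwise the minimum could be moved to an even better neighbour), and the increment $|\log\tilde f(\omega)-\log\tilde f(\omega')|$ is at most $\log\Upsilon$. Combined with the elementary identity $(a-b)(\log a-\log b)=(a-b)^2/\xi$ for some $\xi$ between $a$ and $b$, these two observations should deliver the required comparison with a universal constant.

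The main obstacle, and what the definition of $\Upsilon$ is designed to absorb, is the entropy comparison. The geometric input is that whenever $\tilde f(\omega)<f(\omega)$, any shortest path from $\omega$ to a minimizer realizing $\tilde f(\omega)$ must cross at least one \emph{strongly irregular} edge $(\omega_1,\omega_2)$ with $f(\omega_1)/f(\omega_2)>\Upsilon$; each such edge then contributes at least a quantity of order $\pi(\omega_1)Q(\omega_1,\omega_2)\,f(\omega_1)\log\Upsilon$ to $\Dir_\pi(\log f,f)$. Reorganizing the sum $\sum_\omega\pi(\omega)f(\omega)\log(f(\omega)/\tilde f(\omega))$ as a sum over such edges is where the combinatorial factors in $\Upsilon$ enter: two factors of $\gamma$ are spent on reweighting between $\pi$-weighted and counting measures at the endpoints of each edge; the maximum degree controls how many vertices can route to a given irregular edge; and $1/\min Q$ descends from edge probabilities back to edge rates.

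The principal technical difficulty will be to arrange this path-counting step so that the combinatorial losses are absorbed into the constants already built into $\Upsilon$, with no spurious diameter-dependent multiplier. A cleaner alternative worth pursuing if the geometric argument becomes unwieldy is to replace the one-shot inf-convolution by a short-time evolution under the continuous-time semigroup $e^{tQ}$, which smooths $f$ toward a regular function while producing a controlled amount of entropy, thereby trading the combinatorial argument for a semigroup calculation.
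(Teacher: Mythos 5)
Your plan hinges on the inf-convolution $\tilde f(\omega)=\min_{\omega'}f(\omega')\,\Upsilon^{\dist(\omega,\omega')}$, and both ingredients you attach to it break down. First, the structural claim that on every edge at least one of $\tilde f(\omega),\tilde f(\omega')$ coincides with $f$ is false: on a path $v_0,v_1,v_2,v_3$ with $f(v_0)=\epsilon$ and $f\equiv M>\epsilon\Upsilon^2$ elsewhere, both $\tilde f(v_1)=\epsilon\Upsilon<f(v_1)$ and $\tilde f(v_2)=\epsilon\Upsilon^2<f(v_2)$. More seriously, the Dirichlet comparison $\Dir_\pi(\log\tilde f,\tilde f)\le C\,\Dir_\pi(\log f,f)$ with a universal $C$ is simply false for this regularization. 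Take a star with center $c$, leaves $v_1,\dots,v_N$ and one extra leaf $s$, with $Q\equiv 1$ on edges and $\pi$ uniform (so $\gamma=1$, $u=1$, $\Upsilon=16(N+1)$), and set $f(s)=\epsilon$, $f(c)=f(v_i)=\epsilon\Upsilon^2$. Then only the edge $(s,c)$ contributes to $\Dir_\pi(\log f,f)\approx 2\pi\epsilon\Upsilon^2\log\Upsilon$, whereas $\tilde f(c)=\epsilon\Upsilon$, $\tilde f(v_i)=\epsilon\Upsilon^2$, so each of the $N$ edges $(c,v_i)$ contributes about $\pi\epsilon\Upsilon^2\log\Upsilon$ to $\Dir_\pi(\log\tilde f,\tilde f)$; the ratio is of order $N$, hence unbounded. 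The defect is structural: the inf-convolution propagates a \emph{small} value outward with weights growing like $\Upsilon^k$, so a single irregular edge of $f$ can spawn many comparably heavy edges of $\tilde f$, and the degree factor built into $\Upsilon$ cannot save you.

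The paper's regularization goes in the opposite direction: $f_\Upsilon(\omega)=\max_{\omega'}f(\omega')\,\Upsilon^{-\dist(\omega,\omega')}\ge f$, whose influence \emph{decays} like $\Upsilon^{-k}$; the choice of $\Upsilon$ guarantees $\sum_{\omega\neq\tilde\omega}\pi(\omega)/(u\,\Upsilon^{\dist(\tilde\omega,\omega)})\le\pi(\tilde\omega)/(8\gamma)$, and even then the Dirichlet comparison is not edge-by-edge: each edge where $f_\Upsilon$ differs from $f$ is charged to the Dirichlet contribution of its ``source'' vertex and the resulting geometric series is summed. The entropy step is also of a different nature: one proves $\Ent_\pi(f_\Upsilon)\ge c\,\Ent_\pi(f)$ directly via the variational characterization of entropy with an explicit test function, rather than a defect bound of the form $\Ent_\pi(f)-\Ent_\pi(\tilde f)\le C\alpha_\Upsilon\Dir_\pi(\log f,f)$. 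Your proposed reorganization of $\sum_\omega\pi(\omega)f(\omega)\log(f(\omega)/\tilde f(\omega))$ over strongly irregular edges already loses a factor of the path length in the simplest one-jump example (a path with $f(v_0)=\epsilon$, $f\equiv\epsilon\Upsilon^L$ elsewhere gives ratio of order $L$), which is exactly the diameter-dependent loss you hoped to avoid, and the sketch never explains where the factor $\alpha_\Upsilon$ would come from. The semigroup fallback is circular: quantifying the entropy dissipated by $e^{tQ}$ is precisely what the MLSI asserts, and short-time smoothing does not render an arbitrary positive $f$ $\Upsilon$-regular.
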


While the above is satisfactory for the application we have in mind, it would be interesting to 
find the ``best'' value of the parameter $r$ for which the $r$-regularized MLSI implies MLSI. 
We did not pursue this line of research in the current work. 

Let $(\pi, Q)$ and $(\tilde \pi, \tilde Q)$ be two reversible Markov generators on a finite set $\Omega$. 
For each $x,y \in \Omega$ with $\tilde Q(x,y)>0$, we let $\path_{x,y}$ be the set of all paths $x_0=x, x_1,\ldots, x_k=y$ (of arbitrary lengths $k\geq 1$)
such that $Q(x_i,x_{i+1})>0$ for all $i=0,\ldots ,k-1$. 
We define $\Gamma(Q,\tilde Q):=\bigcup_{x,y\in \Omega,\, \tilde Q(x,y)>0} \path_{x,y}$.
Recall that a weight function $\weight:\, \Gamma(Q,\tilde Q)\to [0,1]$ is called a {\it $(Q,\tilde Q)$--flow} if for every $x,y$ with $\tilde Q(x,y)>0$ we have
$$
\sum_{P\in \path_{x,y}} \weight(P)= \tilde \pi(x) \tilde Q(x,y)
$$
(see \cite[Section~2C]{DS-Comparison2}).
The second main result in the paper is the following theorem. 

\begin{theor}\label{th: comparison-mlsi}
Let $(\pi, Q)$ and $(\tilde \pi, \tilde Q)$ be two reversible Markov generators on a finite set $\Omega$, let $\weight$ be
a $(Q,\tilde Q)$--flow, and suppose that $\pi(\omega)\leq a \tilde \pi(\omega)$ for every $\omega\in \Omega$ for some parameter $a>0$. 
If $(\Omega, \tilde \pi, \tilde Q)$ satisfies MLSI with constant $\tilde \alpha(\tilde Q)$, then for any $r\geq 2$ 
the $r$-regularized modified log-Sobolev constant $\alpha_r(Q)$ of $(\Omega, \pi, Q)$ satisfies 
$$
\alpha_r(Q)\leq C\,a\,A(\weight,r)\,\tilde \alpha_r(\tilde Q)\, , 
$$
where
\begin{equation}\label{nfaofjnpifqwnpijnfijnw}
A(\weight,r)= \max_{\underset{Q(\omega,\omega')>0}{(\omega, \omega')}} \frac{1}{\pi(\omega)Q(\omega, \omega')} 
\sum_{\underset{(\omega,\omega')\in \path}{\path\in \Gamma(Q, \tilde Q)}} \weight(\path)  \big(1+ (\length{\path}-1)^2\log r\big),
\end{equation}
and $C$ is a universal constant. 
\end{theor}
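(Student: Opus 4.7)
The plan is to chain together three inequalities for an arbitrary $f\in\Reg(Q,r)$: (a) an entropy transfer $\Ent_\pi(f)\le a\,\Ent_{\tilde\pi}(f)$; (b) the MLSI hypothesis $\Ent_{\tilde\pi}(f)\le\tilde\alpha(\tilde Q)\,\Dir_{\tilde\pi}(\log f,f)$; and (c) a pathwise Dirichlet comparison $\Dir_{\tilde\pi}(\log f,f)\le C\,A(\weight,r)\,\Dir_\pi(\log f,f)$. Composing (a)--(c) gives the claimed bound on $\alpha_r(Q)$.

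Step (a) is a soft consequence of the variational identity $\Ent_\pi(f)=\inf_{c>0}\Exp_\pi[g_c(f)]$, where $g_c(x):=x\log(x/c)-x+c\ge 0$ pointwise for any $c>0$. Plugging $c=\Exp_{\tilde\pi} f$ gives an upper bound for $\Ent_\pi(f)$; since $g_c\ge 0$ and $\pi\le a\tilde\pi$, the integral against $\pi$ is dominated by $a$ times the integral against $\tilde\pi$, and that latter integral, at this specific value of $c$, equals $\Ent_{\tilde\pi}(f)$ by the variational identity applied to $\tilde\pi$. Step (b) is just the MLSI hypothesis, applied to the positive function $f$.

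For step (c), set $\phi(u,v):=(u-v)(\log u-\log v)\ge 0$, so that $\Dir_{\tilde\pi}(\log f,f)=\tfrac{1}{2}\sum_{x,y}\tilde\pi(x)\tilde Q(x,y)\phi(f(x),f(y))$. Substituting the flow identity $\tilde\pi(x)\tilde Q(x,y)=\sum_{P\in\path_{x,y}}\weight(P)$ and exchanging the sum over paths with the sum over edges of $Q$, the comparison reduces to the pathwise inequality
$$
\phi(f(x_0),f(x_k))\le C\bigl(1+(k-1)^2\log r\bigr)\sum_{i=0}^{k-1}\phi(f(x_i),f(x_{i+1}))
$$
valid for every path $P=(x_0,\ldots,x_k)$ in the graph of $Q$ and every $f\in\Reg(Q,r)$. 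Granting this, the per-edge weight $\pi(\omega)Q(\omega,\omega')$ appears naturally after normalization, and taking the maximum over edges of $Q$ produces precisely the congestion parameter $A(\weight,r)$ of \eqref{nfaofjnpifqwnpijnfijnw}.

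The main obstacle is the pathwise inequality, since $\phi$ is nonlinear in both arguments and does not admit a clean telescoping identity. My plan is first to use the Cauchy--Schwarz lower bound $\phi(u,v)\ge 4(\sqrt u-\sqrt v)^2$, then to telescope $|\sqrt{f(x_0)}-\sqrt{f(x_k)}|\le\sum_i|\sqrt{f(x_i)}-\sqrt{f(x_{i+1})}|$ and apply Cauchy--Schwarz over the path to deduce $(\sqrt{f(x_0)}-\sqrt{f(x_k)})^2\le\tfrac{k}{4}\sum_i\phi(f(x_i),f(x_{i+1}))$. To transfer this $L^2$-type bound back to $\phi$ at the endpoints I will combine it with an upper estimate of the form $\phi(u,v)\le C(1+\log(\max(u,v)/\min(u,v)))(\sqrt u-\sqrt v)^2$, together with the $r$-regularity bound $\max(f(x_0),f(x_k))/\min(f(x_0),f(x_k))\le r^k$; this injects the $\log r$ factor and yields a coefficient of order $k+k^2\log r$, comparable to the target $1+(k-1)^2\log r$ up to an absolute constant (using $r\ge 2$). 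An alternative route, which may give cleaner constants for short paths, is to expand $(f(x_0)-f(x_k))(\log f(x_0)-\log f(x_k))$ as a double sum over path steps, retain the diagonal as $\sum_i\phi(f(x_i),f(x_{i+1}))$, and bound the $k(k-1)$ off-diagonal terms using the edge-wise regularity $|\log f(x_j)-\log f(x_{j+1})|\le\log r$.
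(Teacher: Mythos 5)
Your proposal follows the paper's overall skeleton — the same three-step chain (entropy transfer via the variational formula $\Ent_\pi(f)=\inf_{t>0}\Exp_\pi[f\log f-f\log t-f+t]$ with term-wise comparison under $\pi\le a\tilde\pi$; the MLSI hypothesis for $(\tilde\pi,\tilde Q)$; flow decomposition of $\tilde\Dir_{\tilde\pi}(f,\log f)$ over paths in the graph of $Q$) and the same key pathwise inequality, which is exactly the paper's Lemma~\ref{lem: path-decomp}. Where you genuinely diverge is in how that lemma is proved. The paper argues directly on $\phi(u,v)=(u-v)\log(u/v)$: it isolates the set $H^c$ of steps with relative increment at least $\frac{f(x_T)-f(x_0)}{2Tf(x_T)}$, shows these steps carry at least half of the total increase, and then uses $\log x\ge\frac12(x-1)$ on $[1,2]$ together with monotonicity of $s\mapsto\frac{\log s}{1-s^{-1}}$ and $\delta\le r^T$. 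Your route instead passes through the square-root metric: $\phi(u,v)\ge 4(\sqrt u-\sqrt v)^2$, telescoping plus Cauchy--Schwarz along the path, and the reverse bound $\phi(u,v)\le C\big(1+\log\frac{\max(u,v)}{\min(u,v)}\big)(\sqrt u-\sqrt v)^2$ combined with the ratio bound $r^k$ from $r$-regularity. Both give order $k^2\log r$ for $k\ge 2$; your version is arguably more modular (it reuses standard two-sided comparisons of $\phi$ with the squared square-root difference and needs no reduction to the case where $f(x_T)$ is the path maximum), while the paper's argument is self-contained on $\phi$ and gives the stated form $1+(T-1)^2\log r$ directly. Two small caveats: for $k=1$ your coefficient $1+\log r$ is not comparable to the target constant $1$, so paths of length one must be handled by the trivial inequality $\phi\le\phi$ (as the paper does); and the ``alternative route'' you sketch at the end does not work as stated, since bounding the off-diagonal terms by $|f(x_{i+1})-f(x_i)|\log r$ cannot be closed against $\sum_i\phi(f(x_i),f(x_{i+1}))$ when the per-step increments are small (e.g.\ $f$ increasing by a factor $1+\varepsilon$ per step with $\varepsilon\to 0$). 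Neither caveat affects your main argument, which is correct.
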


Note that without imposing the $r$--regularization on functions on $\Omega$
(i.e when considering the setting $r=\infty$), the above comparison
result in itself does not produce a useful estimate.
However, when combined with Theorem~\ref{th: MLSI=reg-MLSI},
a comparison statement for the ``usual'' MLSI constants readily follows. 
This fills a gap in 
the literature by providing  a result for MLSI similar  to classical comparison statements for Poincar\'e and log-Sobolev inequalities (see for example \cite[Chapter~4]{saloff}).

The above result becomes interesting when the MLSI and the log-Sobolev constants of $(\tilde \pi, \tilde Q)$ have different orders of magnitude.
Indeed,
it is always 
possible to bound $\alpha_r(Q)$ by the log-Sobolev constant $\alpha_{LSI}(Q)$
and then use standard comparison procedures (see, in particular, \cite[Theorem~4.2.5]{saloff}) to bound the latter by the 
log-Sobolev constant of $(\tilde\pi,\tilde Q)$
multiplied by a function of the flow similar to the one in Theorem~\ref{th: comparison-mlsi}.  
One particular example is when $\tilde Q$ is the trivial Markov generator on $(\Omega, \pi)$, with $\tilde Q(\omega,\omega')=\pi(\omega')=\tilde\pi(\omega')$
for any $\omega\neq \omega'$.
It is known that this chain satisfies a log-Sobolev inequality
with $\alpha_{LSI}(\tilde Q)=O\big(\log\frac{1}{\min_{\omega\in \Omega}\pi(\omega)}\big)$
and the Modified log-Sobolev Inequality
with constant $1$. 
It follows (see \cite[Theorm~2.3]{DS-Comparison2}
and \cite[Section~4.2]{saloff}) that 
any triple $(\Omega, \pi, Q)$ satisfies a log-Sobolev inequality (and thus MLSI) with a constant 
$$
C\log \frac{1}{\min_{\omega\in \Omega}\pi(\omega)} \max_{\underset{Q(\omega,\omega')>0}{(\omega, \omega')}} \frac{1}{\pi(\omega)Q(\omega, \omega')} 
\sum_{\underset{(\omega,\omega')\in \path}{\path\in \path_{x,y},\, x\neq y}} \weight(\path) \length{\path}.
$$
The bound provided by combining Theorems~\ref{th: MLSI=reg-MLSI} and~\ref{th: comparison-mlsi} improves the last estimate in many situations of interest, 
as it replaces the ``global''
factor $\log \frac{1}{\min_{\omega\in \Omega}\pi(\omega)}$ by a ``local'' parameter $\log \Upsilon$, at
the price of squaring the lengths of the paths in the flow. 

To illustrate the power of the comparison procedure introduced in this paper, we will apply this concept to derive 
a sharp Modified log-Sobolev Inequality for the {\it switch chain} on the set of regular bipartite graphs. 
This chain uses a standard local operation called the
{\it simple switching} which takes two non-incident edges $(i_1,j_1)$
and $(i_2,j_2)$ of the graph uniformly at random, destroys them, 
and replaces them by their ``crossed'' counterparts $(i_1,j_2)$
and $(i_2,j_1)$ whenever possible. Formally, given $n\in \N$ and $2\leq d\leq n/2$, we denote by $\BipGSet_n(d)$
the set of all simple bipartite $d$--regular graphs on
the vertex set
$[n^{(\ell)}]\sqcup[n^{(r)}]$ (where we use the superscripts
``$(\ell)$'' and ``$(r)$''
for sets of left and right vertices),
and we equip it with the uniform probability measure $\Psimple$. 
The switch chain is defined through its Markov generator $Q_u$ as follows:
$$
Q_u(G_1,G_2):=\begin{cases}
-\frac{|\neigh(G_1)|}{nd(nd-1)/2},&\mbox{if $G_1=G_2$};\\
\big(nd(nd-1)/2\big)^{-1},&\mbox{if $G_2\in \neigh(G_1)$};\\
0,&\mbox{otherwise.}
\end{cases}
$$
Here, $\neigh(G_1)$ denotes the set of all graphs in $\BipGSet_n(d)$ which can be obtained
from $G_1$ by the simple switching operation. 
The mixing time of this chain was first investigated in \cite{KTV},
followed by papers \cite{CDG,Greenhill,Greenhill2,MES,EKMS,EMMS,BHS,AK,GS,unified}
which studied the switch chain 
for several graph models. We refer to \cite{unified} for a recent account of this line of research and a comprehensive reference list. 

Recently, the authors \cite{TY2020+} established a sharp Poincar\'e inequality for the chain $(\BipGSet_n(d), \Psimple, Q_u)$ for any degree $d\geq 3$ satisfying
$d\leq n^{c}$, for some small universal constant $c$. 
When $d$ is fixed, they also established a log-Sobolev inequality with a constant $C_dn\log n$ and showed that the dependence of the LSI constant on $n$ is sharp. 
The strategy employed in \cite{TY2020+} is a double comparison procedure with the standard random transposition model and the switch chain 
on the configuration model. The main challenge in \cite{TY2020+} is that in the regime
$d\to \infty$, the configuration model and the space
$(\BipGSet_n(d), \Psimple, Q_u)$ do not
admit
standard comparison techniques for Markov chains without incurring a loss of precision.
To overcome this issue, 
a delicate construction of function extensions with induced
``controlled'' fluctuations was introduced \cite{TY2020+}.
When $d$ is fixed, the standard comparison techniques can be employed,
and the main technical task is to construct {\it canonical path} and 
a flow with a small congestion. This was carried out
in \cite{TY2020+} and allowed the authors to obtain the sharp log-Sobolev inequality 
which implied in particular that the total variation mixing time of the switch chain is bounded above by $C_d n\log^2n$ for some constant depending only on $d$. 
Previously, the best known bound in this regime was $C_dn^7\log n$ \cite{Dyer-arxiv}. 
The mixing time bound obtained in \cite{TY2020+} is off by a factor $\log n$ from the conjectured optimal estimate.
That in itself is not surprising since the approach relied on 
a comparison with the random transposition model, and it is known that the log-Sobolev constant for that model fails to capture the correct total variation mixing time \cite{G}. 
On the other hand, the sharp MLSI constant for the random transposition model was calculated in \cite{G} and it was shown that it does yield
the optimal TV mixing time bound. 
This is one particular instance where the modified log-Sobolev inequality offers an advantage over the classical log-Sobolev inequality. To summarize, the comparison 
techniques developed in this paper 
allowed us to prove the following.
\begin{theor}\label{th: mlsi-switch}
For every fixed $2\leq d\leq n/2$, the triple
$(\BipGSet_n(d), \Psimple, Q_u)$ satisfies the Modified log-Sobolev Inequality with a
constant $C_dn$, where $C_d>0$ depends only on $d$. 
\end{theor}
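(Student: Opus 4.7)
The plan is to combine Theorems~\ref{th: MLSI=reg-MLSI} and~\ref{th: comparison-mlsi}. First I compute the parameter $\Upsilon$ of~\eqref{eq: m 05623058} for the switch chain: since $\Psimple$ is uniform one has $\gamma=1$, every $G\in\BipGSet_n(d)$ has at most $\binom{nd}{2}$ switch-neighbors, and every nonzero value of $Q_u$ equals $2/(nd(nd-1))$. Hence $\Upsilon=O_d(n^4)$ and $\log\Upsilon=O(\log n)$, and by Theorem~\ref{th: MLSI=reg-MLSI} it suffices to show that the $\Upsilon$-regularized MLSI constant $\alpha_\Upsilon(Q_u)$ is $O_d(n)$.

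To bound $\alpha_\Upsilon(Q_u)$ I would apply Theorem~\ref{th: comparison-mlsi} with a reference chain $\tilde Q$ on $\BipGSet_n(d)$ whose ``usual'' MLSI constant is $O_d(n)$. A natural choice is the chain obtained by projecting the random transposition dynamics on the bipartite configuration model (pairings of $nd$ left and $nd$ right half-edges) onto $\BipGSet_n(d)$. The projected stationary measure coincides with $\Psimple$, so the parameter $a$ in Theorem~\ref{th: comparison-mlsi} equals $1$, and the MLSI constant of $\tilde Q$ inherits an $O_d(n)$ bound from the sharp Gao--Quastel estimate $\alpha_{MLSI}(\text{random transposition on }S_{nd})=\Theta(nd)$.

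The central technical task is to construct a $(Q_u,\tilde Q)$-flow $\mathcal{W}$ for which $A(\mathcal{W},\Upsilon)=O_d(1)$. I would adapt the canonical path construction from \cite{TY2020+}, in which each transition of $\tilde Q$ is realized as a sequence of at most $O_d(1)$ consecutive switches of $Q_u$, and the standard congestion is $O_d(1)$. The main obstacle is the $(\length{P}-1)^2\log\Upsilon$ penalty in~\eqref{nfaofjnpifqwnpijnfijnw}: a naive estimate gives only $A(\mathcal{W},\Upsilon)=O_d(\log n)$ and hence $\alpha_\Upsilon(Q_u)=O_d(n\log n)$, losing the target logarithmic factor. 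The key point will be that, for fixed $d$, paths of length $\geq 2$ in the flow arise only from the small fraction of reference transitions that would create a multi-edge---a density $O_d(1/n)$ event in $\BipGSet_n(d)$---and a careful accounting of the flow restricted to such ``conflict-resolving'' paths should absorb the $\log\Upsilon$ penalty and yield $A(\mathcal{W},\Upsilon)=O_d(1)$.

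Putting everything together, Theorem~\ref{th: comparison-mlsi} gives $\alpha_\Upsilon(Q_u)\leq C\,a\,A(\mathcal{W},\Upsilon)\,\tilde\alpha(\tilde Q)=O_d(n)$, and Theorem~\ref{th: MLSI=reg-MLSI} then implies $\alpha_{MLSI}(Q_u)=O_d(n)$, as required.
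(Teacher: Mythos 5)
Your overall architecture matches the paper's: compute $\Upsilon=n^{O(1)}$ so $\log\Upsilon=O(\log n)$, reduce via Theorem~\ref{th: MLSI=reg-MLSI} to the $\Upsilon$-regularized constant, and bound that by Theorem~\ref{th: comparison-mlsi} with a flow in which paths of length $\geq 2$ are so rare that the $(\length{P}-1)^2\log\Upsilon$ penalty is absorbed. The second half of your sketch is essentially the paper's Lemma~\ref{lem: flow-congestion}: length-one paths come from the ``perfect pair'' matchings and carry $O(1)$ congestion, while the longer ``connection'' paths contribute only $O(t/\sqrt{n})$ by Proposition~\ref{prop: connections}(ii), so with $t=\log\Upsilon=O(\log n)$ the penalty disappears. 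Your heuristic that conflict-resolving transitions have density $O_d(1/n)$ is the right intuition, though making it quantitative is exactly the imported machinery of connections from \cite{TY2020+}.

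The genuine gap is the reference chain. You propose ``the chain obtained by projecting the random transposition dynamics on the bipartite configuration model onto $\BipGSet_n(d)$'' and assert that its stationary measure is $\Psimple$ and that it ``inherits'' an $O_d(n)$ MLSI from random transpositions. Neither claim is justified: the image of the pairing/transposition dynamics lives on the multigraph space $\ConfBipGSet_n(d)$, not on $\BipGSet_n(d)$, and there is no general principle transferring an MLSI to a trace, censored, or otherwise ``projected'' chain on a subset of the state space --- to use the MLSI on the larger space one must extend an arbitrary test function $f$ from $\BipGSet_n(d)$ to $\ConfBipGSet_n(d)$ and simultaneously control $\Ent_{\Pconfig}(\tilde f)$ from below and $\Dir_{\Pconfig}(\tilde f,\log\tilde f)$ from above. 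This is precisely the content of the paper's Section~4: it defines an explicit auxiliary generator $\tilde Q_u$ on $(\BipGSet_n(d),\Psimple)$ through the $s$-neighborhood weights $\beta_{G_1',G_2'}$ in \eqref{eq: def-aux-chain}, takes $\tilde f$ to be the average of $f$ over $s$-neighborhoods (and the constant $1$ off $\categ([0,\cconst])$), and proves Proposition~\ref{prop: auxiliary chain} by convexity of $(x,y)\mapsto(x-y)\log\frac{x}{y}$ together with a careful treatment of the boundary categories; the error terms there are controlled by Lemma~\ref{nfapfienfpiwjfnpqifejnpqifjn}, which in turn forces the preliminary reduction (Lemma~\ref{akjfnqoifnfoqifalkdjfn}) to functions bounded below with $\Exp_{\Psimple}f=1$. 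None of this is present in your proposal, and without it the claim that your reference chain has MLSI constant $O_d(n)$ --- the quantity $\tilde\alpha$ fed into Theorem~\ref{th: comparison-mlsi} --- is unsupported; it is the main technical work of the proof rather than a step one can quote.
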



\begin{cor}
For every fixed $2\leq d\leq n/2$, the total variation
mixing time $\tmix:=\tmix(Q_u, \frac14)$ of the switch chain
$(\BipGSet_n(d), \Psimple, Q_u)$ is bounded above by $C_dn\log n$,
for some constant $C_d$ depending only on $d$. 
\end{cor}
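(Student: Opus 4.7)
The plan is to deduce the corollary directly from the general inequality \eqref{eq: mix-mlsi} relating the total variation mixing time to the MLSI constant, combined with the estimate $\alpha_{MLSI}(Q_u)\le C_d n$ provided by Theorem~\ref{th: mlsi-switch}. Setting $\varepsilon=1/4$ and $Q=Q_u$ in \eqref{eq: mix-mlsi}, and using that $\Psimple$ is the uniform measure on $\BipGSet_n(d)$ (so $\pi_{\min}=1/|\BipGSet_n(d)|$), I would obtain
$$
\tmix(Q_u,\,1/4)\;\le\;\alpha_{MLSI}(Q_u)\,\Bigl(\log\log|\BipGSet_n(d)|+\log 8\Bigr).
$$

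The only remaining task is to show that $\log\log|\BipGSet_n(d)|=O_d(\log n)$. For fixed $d$, a trivial counting suffices: any simple bipartite $d$-regular graph is determined by the neighbourhoods of the left vertices, and each such neighbourhood is one of $\binom{n}{d}$ subsets of $[n^{(r)}]$. Hence
$$
|\BipGSet_n(d)|\;\le\;\binom{n}{d}^n\;\le\;n^{dn},
$$
which yields $\log\log|\BipGSet_n(d)|\le \log(dn\log n)\le 2\log n$ for $n$ sufficiently large (depending on $d$). Substituting Theorem~\ref{th: mlsi-switch} and this estimate back produces $\tmix(Q_u,1/4)\le C_d n\cdot(2\log n+\log 8)\le C_d' n\log n$, which is the claimed bound.

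There is no substantive obstacle here: the corollary is essentially a bookkeeping step after the MLSI bound. All of the nontrivial content has been packed into Theorem~\ref{th: mlsi-switch}, and the present deduction uses only the standard MLSI-to-mixing-time conversion \eqref{eq: mix-mlsi} together with a very crude cardinality bound on $\BipGSet_n(d)$. It is worth noting that the double logarithm in \eqref{eq: mix-mlsi} is precisely what prevents the size of the state space from inflating the final mixing-time estimate beyond the optimal $n\log n$ scaling; by contrast, a log-Sobolev-based argument would pay an additional $\log n$ factor from $\log(1/\pi_{\min})$, which is exactly the $\log n$ gap between the mixing time bound of \cite{TY2020+} and the one proved here.
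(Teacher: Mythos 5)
Your deduction is correct and is exactly the intended one: the corollary follows by plugging the bound $\alpha_{MLSI}(Q_u)\le C_d n$ of Theorem~\ref{th: mlsi-switch} into \eqref{eq: mix-mlsi} with $\varepsilon=1/4$, together with the crude estimate $|\BipGSet_n(d)|\le n^{dn}$ giving $\log\log(1/\pi_{\min})=O(\log n)$. This matches the paper's (implicit) argument, so there is nothing to add.
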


The above bound is sharp (see Proposition~\ref{prop:lower bound}) and was previously conjectured in \cite{CDG}. With the techniques developed in \cite{TY2020+} and the present paper,
we believe it is possible to derive sharp bounds on the mixing time of the switch chain for other graph models of interest, including simple undirected $d$--regular graphs. 

The paper is organized as follows: Section~\ref{sec: regularization} is devoted for the proof of Theorem~\ref{th: MLSI=reg-MLSI}, while Theorem~\ref{th: comparison-mlsi} is proved in Section~\ref{sec: mlsi-comparison}. Finally, the proof of Theorem~\ref{th: mlsi-switch} is carried in Section~\ref{sec: mlsi-switch}. 

\section{MLSI and function regularization}\label{sec: regularization}

Before proceeding with the proof of Theorem~\ref{th: MLSI=reg-MLSI}, we consider 
another statement in the same spirit aiming at restricting the class of functions on which the MLSI needs be verified. The next lemma may be of independent interest, and will be used in Section~\ref{sec: mlsi-switch} when proving the MLSI for the switch chain. 

\begin{lemma}\label{akjfnqoifnfoqifalkdjfn}
There are universal constants $c,C>0$ with the following property.
Assume that a reversible Markov chain $(\Omega,Q,\pi)$ satisfies
$$
\Ent_\pi\, f\leq K\,\Dir_\pi(f,\log f)
$$
for every positive function $f$ on $\Omega$ with $f(\omega)\geq c$ for all
$\omega\in\Omega$ and $\Exp_\pi f=1$.
Then $(\Omega,Q,\pi)$ satisfies the MLSI with a constant $CK$.
\end{lemma}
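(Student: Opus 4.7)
The plan is to produce the general MLSI by applying the hypothesis to a suitable modification of $f$ that has minimum value at least $c$, then to absorb the discrepancy into the Dirichlet form.

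Given a positive $f$ with $\Exp_\pi f=1$, set $c':=c/(1-c)$ and let $g:=\max(f,c')$, $M:=\Exp_\pi g\in[1,1+c']$, and $\tilde g:=g/M$. By construction $\tilde g\geq c'/M\geq c$ and $\Exp_\pi\tilde g=1$, so the hypothesis gives $\Ent_\pi(\tilde g)\leq K\,\Dir_\pi(\tilde g,\log\tilde g)$. Using the homogeneity $\Ent_\pi(g)=M\,\Ent_\pi(\tilde g)$ and bilinearity of $\Dir$ (together with $\log\tilde g=\log g-\log M$), this is equivalent to $\Ent_\pi(g)\leq K\,\Dir_\pi(g,\log g)$. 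A straightforward edge-by-edge inspection (since replacing $f$ by $f\vee c'$ only shrinks both the jumps of $f$ and the jumps of $\log f$ on edges where the truncation is active) yields $\Dir_\pi(g,\log g)\leq \Dir_\pi(f,\log f)$. Hence we arrive at the intermediate bound $\Ent_\pi(g)\leq K\,\Dir_\pi(f,\log f)$.

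The main task is then to bound the defect $\Ent_\pi(f)-\Ent_\pi(g)$. Writing $A:=\{f<c'\}$, one directly computes
\[
\Ent_\pi(f)-\Ent_\pi(g)=\int_{A}(f\log f-c'\log c')\,d\pi+M\log M.
\]
For $c'\leq 1/e$ the function $x\mapsto x\log x$ is decreasing on $(0,c')$, so the integrand is bounded by $-c'\log c'$, and $M\log M=O(c'\pi(A))$; together this gives $\Ent_\pi(f)-\Ent_\pi(g)\leq C_0\,c'|\log c'|\,\pi(A)$ for a universal $C_0$.

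The step I expect to be the main obstacle is controlling $\pi(A)|\log c'|$ by the Dirichlet form $\Dir_\pi(f,\log f)$. The plan is to first extract a Poincaré inequality from the hypothesis by linearization: applying the assumption to $f_\varepsilon=1+\varepsilon h$ (admissible once $\varepsilon<1-c$) and letting $\varepsilon\to 0$ produces $\Var_\pi(h)\leq 2K\,\Dir_\pi(h,h)$. Applying this to $h=\mathbf{1}_A-\pi(A)$ yields a lower bound on the edge-boundary flux $\Pi(\partial A)$ of order $\pi(A)(1-\pi(A))/K$. On the other hand, every edge in $\partial A$ separates values below and above $c'$, so one can bound from below the contribution of $\partial A$ to $\Dir_\pi(f,\log f)$; dyadically decomposing $A$ along the levels $\{f<c'2^{-k}\}$ allows one to recover a factor of order $|\log c'|$ by summing contributions across these levels. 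Combining the resulting estimate $\pi(A)|\log c'|\lesssim K\,\Dir_\pi(f,\log f)$ with the earlier bounds yields
\[
\Ent_\pi(f)\leq \Ent_\pi(g)+C_0\,c'|\log c'|\,\pi(A)\leq CK\,\Dir_\pi(f,\log f),
\]
provided $c$ is sufficiently small, which is the desired MLSI. The delicate point throughout is arranging the dyadic decomposition and the Cheeger-type argument so that the constants remain universal (independent of $f$, $\pi$ and $Q$); this is where the bulk of the technical work is expected to lie.
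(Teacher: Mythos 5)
Your reduction to the truncated--renormalized function is fine: with $g=\max(f,c')$, $M=\Exp_\pi g$ and $\tilde g=g/M$ one indeed gets $\Ent_\pi(g)\le K\,\Dir_\pi(g,\log g)\le K\,\Dir_\pi(f,\log f)$, and your defect identity and the bound $\Ent_\pi(f)-\Ent_\pi(g)\le C_0\,c'|\log c'|\,\pi(A)$ are correct. The gap is exactly the step you flag as the main obstacle, and as sketched it does not go through. First, the Poincar\'e inequality obtained by linearization gives a flux bound of order $\pi(A)(1-\pi(A))/K$, and you silently drop the factor $1-\pi(A)$; when $\pi(A)$ is close to $1$ (which is possible for mean-one $f$, e.g.\ $f<c'$ on most of the space and of order $1/\pi(A^c)$ on a small set) the bound degenerates and cannot yield $\pi(A)\lesssim K\,\Dir_\pi(f,\log f)$ by itself. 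Second, the dyadic decomposition along the \emph{sub}-levels $\{f<c'2^{-k}\}$ cannot recover anything in general: if $f\ge c'/2$ everywhere, all these sets beyond $k=0$ are empty, and the single cut at level $c'$ may contribute arbitrarily little to $\Dir_\pi(f,\log f)$ (take $f=c'-\delta$ on one side of the boundary and $f=c'$ on the other; the per-edge term is of order $\delta^2/c'$), so neither a factor $|\log c'|$ nor even a constant per unit of flux can be extracted from the boundary of $A$ alone. (Also note that $|\log c'|$ is a universal constant here, since $c$ is; the real quantity to control is just $\pi(A)$.)

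The good news is that no Poincar\'e or Cheeger-type input is needed: since $\Exp_\pi f=1$ you can write $\Ent_\pi(f)=\Exp_\pi\big[1-f+f\log f\big]$ with nonnegative integrand, and $\phi(x)=1-x+x\log x$ is decreasing on $(0,1)$, so every point of $A=\{f<c'\}$ contributes at least $1-c'+c'\log c'\ge\frac12$; hence $\pi(A)\le 2\,\Ent_\pi(f)$, and your defect is at most $2C_0\,c'|\log c'|\,\Ent_\pi(f)\le\frac12\Ent_\pi(f)$ once $c$ is a small enough universal constant. Absorbing the defect into the entropy (rather than the Dirichlet form) gives $\Ent_\pi(f)\le 2\,\Ent_\pi(g)\le 2K\,\Dir_\pi(f,\log f)$ and finishes the proof. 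This is in spirit what the paper does, with a different renormalization: instead of dividing by $M$, it sets $f'=\max(f,c)$ where $f\le1$ and $f'=\alpha+(1-\alpha)f$ where $f>1$, chooses $\alpha$ so that $\Exp_\pi f'=1$, and then compares $\Ent_\pi f\le C\,\Ent_\pi f'$ term by term and $\Dir_\pi(f',\log f')\le\Dir_\pi(f,\log f)$ edge by edge; either renormalization works, but the defect must be charged to the entropy, not to the Dirichlet form.
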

\begin{proof}
We will assume that the constant $c$ is sufficiently small so that in particular $1-c+c\log c\geq \frac{1}{2}$
and $c/(1-c)\leq 1/2$.

Fix any non-constant positive function $f:\Omega\to(0,\infty)$ with $\Exp_\pi f=1$, and define
an auxiliary function $f'$ as follows:
$$
f'(\omega):=\begin{cases}
\max(f(\omega),c),&\mbox{ if $f(\omega)\leq 1$};\\
\alpha +(1-\alpha)f(\omega)&\mbox{ if $f(\omega)> 1$},
\end{cases}
$$
where the parameter $\alpha\in[0,1)$ is chosen so that
$\Exp_\pi f'=1$. 
Note that in view of our assumptions,
$$
\Ent_\pi\, f'\leq K\,\Dir_\pi(f',\log f').
$$

\medskip

We first estimate the value of the parameter $\alpha$.
Let $T_0,T_{[c,1]},T_{>1}$ be the partition of the space $\Omega$
into subsets of points $\omega$ where
$f(\omega)<c$, $f(\omega)\in[c,1]$ and $f(\omega)>1$, respectively.
Thus,
$$
1=\Exp_\pi f'=c\,\pi(T_0)+\sum_{\omega\in T_{[c,1]}}f(\omega)\pi(\omega)
+\alpha\,\pi(T_{>1})+(1-\alpha)\sum_{\omega\in T_{>1}}f(\omega)\pi(\omega),
$$
implying that
$$
\alpha\bigg(\sum_{\omega\in T_{>1}}f(\omega)\pi(\omega)-\pi(T_{>1})\bigg)
=c\,\pi(T_0)-\sum_{\omega\in T_0}f(\omega)\pi(\omega).
$$
It remains to observe that
$$
\sum_{\omega\in T_{>1}}f(\omega)\pi(\omega)-\pi(T_{>1})
=\pi(T_0)-\sum_{\omega\in T_0}f(\omega)\pi(\omega)+
\pi(T_{[c,1]})-\sum_{\omega\in T_{[c,1]}}f(\omega)\pi(\omega)
\geq (1-c)\pi(T_0)
$$
to conclude that $\alpha\leq \frac{c}{1-c}\leq 1/2$.

\medskip

The next step of the argument is to compare the entropies of the functions $f$ and $f'$.
We will use the representations of the entropies
\begin{align*}
\Ent_\pi \,f&=\sum_{\omega\in\Omega}\big(1-f(\omega)+f(\omega)\log f(\omega)\big)\pi(\omega);\\
\Ent_\pi \,f'&=\sum_{\omega\in\Omega}\big(1-f'(\omega)+f'(\omega)\log f'(\omega)\big)\pi(\omega),
\end{align*}
which have the advantage that the convex function $x\to 1-x+x\log x$, $x\in(0,\infty)$,
is non-negative, allowing term-by-term comparison of the expressions on the right side.
Clearly, for every $\omega\in T_{[c,1]}$, the respective terms agree.
Further, for any $\omega\in T_0$, in view of the conditions on $c$,
$$
1-f'(\omega)+f'(\omega)\log f'(\omega)
=1-c+c\log c\geq \frac{1}{2}\geq\frac{1}{2}\big((1-f(\omega)+f(\omega)\log f(\omega)\big),
$$
where we also used that $x\to 1-x+x\log x$ is bounded above by $1$ on $(0,1)$. 
Finally, for $\omega\in T_{>1}$ we consider two cases. If $f(\omega)\geq 10$ then $f'(\omega)\geq f(\omega)/2\geq 5$,
and we have
$$
1-f(\omega)+f(\omega)\log f(\omega)\leq f(\omega)\log f(\omega),\quad
1-f'(\omega)+f'(\omega)\log f'(\omega)\geq \frac12 f'(\omega)\log f'(\omega).
$$
At the same time, since $f'(\omega)\geq f(\omega)/2$ and $f(\omega)\geq 10$, we have 
$f'(\omega)\log f'(\omega)\geq \frac{1}{4}f(\omega)\log f(\omega)$. Thus, whenever
$f(\omega)\geq 10$, we have $1-f'(\omega)+f'(\omega)\log f'(\omega)\geq \frac18\big(
1-f(\omega)+f(\omega)\log f(\omega)\big)$. 
In the remaining case $f(\omega)\in(1,10)$, we observe that 
$$
1-f(\omega)+f(\omega)\log f(\omega)\leq (f(\omega)-1)^2,\quad
1-f'(\omega)+f'(\omega)\log f'(\omega)\geq \frac16 (f'(\omega)-1)^2,
$$
and thus we conclude that
$1-f'(\omega)+f'(\omega)\log f'(\omega)\geq \frac{(1-\alpha)^2}{6}\big(
1-f(\omega)+f(\omega)\log f(\omega)\big)$.

To summarize, we have shown that
$$
C\,\Ent_\pi \,f'\geq \Ent_\pi \,f
$$
for some constant $C>0$.

\medskip

Now, we compare the Dirichlet forms with the functions $f'$ and $f$.
This step is elementary
since it is sufficient for us to confirm that $\Dir_\pi (f',\log f')\leq \Dir_\pi (f,\log f)$, while
the construction of $f'$ guarantees that for every $\omega,\omega'\in\Omega$ with
$f(\omega)\leq f(\omega')$, we have $f'(\omega')-f'(\omega)\leq f(\omega')-f(\omega)$
and $\frac{f'(\omega')}{f'(\omega)}\leq \frac{f(\omega')}{f(\omega)}$.
The result follows.
\end{proof}

The main goal of this section is to prove Theorem~\ref{th: MLSI=reg-MLSI}.
We first define the notion of $r$--regularization. 
Given a reversible Markov generator $Q$ on $(\Omega, \pi)$, and a positive function $f:\Omega\to \R_+$, 
the {\it $r$--regularization} of $f$ is the function $f_{r}$ given by
$$
f_{r}(\omega):=\max\limits_{\omega'\in \Omega}\frac{f(\omega')}{r^{\dist(\omega,\omega')}}\quad\mbox{for every $\omega\in \Omega$},
$$
where $\dist(\cdot,\cdot)$ is the usual distance in the
graph $(\Omega,\{(w,w'):\,w\neq w',\,Q(w,w')\neq 0\})$. 

In what follows, given $r>1$, it will be convenient to associate with every positive function $f$ on $\Omega$
a mapping $\Rel_{f}=\Rel_{f,\Omega,r}$ as follows.
Let $f_{r}$ be the $r$--regularization of $f$. For every $\omega\in \Omega$ with $f_{r}(\omega)>f(\omega)$
there is at least one vertex $\tilde \omega\in\Omega$ such that $f_{r}(\omega)=\frac{f(\tilde \omega)}{r^{\dist(\omega,\tilde\omega)}}
=\frac{f_{r}(\tilde \omega)}{r^{\dist(\omega,\tilde\omega)}}$. Then we set $\Rel_{f}(\omega):=\tilde\omega$. Thus, $\Rel_{f}$ is a mapping
on $\{\omega:\;f_{r}(\omega)>f(\omega)\}$.
Note that in general $\Rel_{f}$ does not have to be uniquely defined.
For convenience, we will fix a single realization of $\Rel_{f,\Omega,r}$ for every triple $(f,\Omega,r)$.

Simple properties of $r$--regularizations are collected in the following lemma.
\begin{lemma}\label{l: 317610498}
Let $Q$ be a reversible Markov generator on a finite probability space $(\Omega, \pi)$, 
$f$ be a positive function on $\Omega$ 
and let $f_{r}$ be the $r$--regularization of $f$. Then
\begin{itemize}

\item $f_{r}$ is $r$--regular;

\item For any $\omega\in \Omega$ with $f_{r}(\omega)>f(\omega)$ there exists a geodesic
path $P$ on the graph $(\Omega,\{(w,w'):\,w\neq w',\,Q(w,w')\neq 0\})$
starting at $\omega$ such that
$$
f_{r}(P[\tau])=\frac{f(P[\length{P}])}{r^{\length{P}-\tau}}
$$
for all $\tau\in[0,\length{P}]$.
\end{itemize}
\end{lemma}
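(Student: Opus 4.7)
The plan is to prove the two items in turn, both of which reduce to straightforward manipulations of the defining max-formula for $f_r$ together with the triangle inequality for the graph distance on $\Omega$.

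For the first item, fix $\omega_1,\omega_2\in\Omega$ and pick $\omega^{*}\in\Omega$ attaining the maximum in the definition of $f_r(\omega_1)$, so that $f_r(\omega_1)=f(\omega^{*})/r^{\dist(\omega_1,\omega^{*})}$. By the triangle inequality, $\dist(\omega_1,\omega^{*})\geq \dist(\omega_2,\omega^{*})-\dist(\omega_1,\omega_2)$, hence
$$
f_r(\omega_1)\;\leq\;\frac{f(\omega^{*})}{r^{\dist(\omega_2,\omega^{*})}}\cdot r^{\dist(\omega_1,\omega_2)}\;\leq\; f_r(\omega_2)\cdot r^{\dist(\omega_1,\omega_2)},
$$
where the last inequality follows because $\omega^{*}$ is a valid candidate in the max defining $f_r(\omega_2)$. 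This is precisely the $r$-regularity of $f_r$.

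For the second item, fix $\omega$ with $f_r(\omega)>f(\omega)$ and set $\tilde\omega:=\Rel_f(\omega)$, so that by construction $f_r(\omega)=f(\tilde\omega)/r^{\dist(\omega,\tilde\omega)}$ and $f_r(\tilde\omega)=f(\tilde\omega)$. Let $P$ be any geodesic on the graph from $\omega=P[0]$ to $\tilde\omega=P[\ell]$, where $\ell=\length{P}=\dist(\omega,\tilde\omega)$. For each $\tau\in[0,\ell]$, plugging $\tilde\omega$ into the max defining $f_r(P[\tau])$ yields the lower bound
$$
f_r(P[\tau])\;\geq\;\frac{f(\tilde\omega)}{r^{\dist(P[\tau],\tilde\omega)}}\;=\;\frac{f(\tilde\omega)}{r^{\ell-\tau}},
$$
since $P$ is geodesic, giving $\dist(P[\tau],\tilde\omega)=\ell-\tau$. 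The matching upper bound comes from applying the $r$-regularity established in the first item to the pair $(P[\tau],\omega)$:
$$
f_r(P[\tau])\;\leq\; f_r(\omega)\cdot r^{\dist(P[\tau],\omega)}\;=\;\frac{f(\tilde\omega)}{r^{\ell}}\cdot r^{\tau}\;=\;\frac{f(\tilde\omega)}{r^{\ell-\tau}}.
$$
The two bounds coincide, yielding the stated identity.

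There is no real obstacle here; the statement is essentially bookkeeping around the max-formula for $f_r$. The only point worth flagging is the symmetry of the argument for item two, namely that the lower bound uses the definition of $f_r$ while the matching upper bound uses the $r$-regularity from item one, with the geodesic property forcing all intermediate vertices to sit exactly on the extremal ``slope'' $r^{\tau-\ell}f(\tilde\omega)$.
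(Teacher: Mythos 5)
Your proof is correct. The paper states this lemma without proof (it is presented as a collection of simple properties of the $r$--regularization), and your argument — the triangle inequality applied to a maximizer in the defining max-formula for item one, and the matching lower bound (from the definition of $f_r$ at the maximizer $\tilde\omega$) and upper bound (from the $r$--regularity of item one along a geodesic to $\tilde\omega$) for item two — is precisely the routine verification the authors intend, so nothing further is needed.
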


\bigskip

The strategy of proving Theorem~\ref{th: MLSI=reg-MLSI} is straightforward:
for any positive function $f$ on $\Omega$ we consider its $\Upsilon$--regularization $f_{\Upsilon}$
and show that the entropies are related as
${\rm Ent}_\pi(f_{\Upsilon})\geq c\,{\rm Ent}_\pi(f)$, whereas $\Dir_\pi(\log f_{\Upsilon},f_{\Upsilon})\leq C\,\Dir_\pi(\log f,f)$ for some universal constants $c,C>0$.
This immediately implies the required result. The necessary auxiliary statements are verified below.

\begin{lemma}\label{l: 910497219287}
Let $Q$ be a reversible Markov generator on a finite probability space $(\Omega, \pi)$, 
$f$ be a positive function on $\Omega$, and let $\gamma$ and $\Upsilon$ be defined
according to \eqref{eq: m 05623058}. Further, let $f_{\Upsilon}$ be the $\Upsilon$--regularization of $f$. Then
$$\Dir_\pi(\log f_{\Upsilon},f_{\Upsilon})\leq C\,\Dir_\pi(\log f,f),$$
where $C>0$ is a universal constant.
\end{lemma}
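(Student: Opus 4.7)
The plan is to bound the Dirichlet form of $f_\Upsilon$ edge-by-edge in terms of the edge contributions of the Dirichlet form of $f$, exploiting the geodesic structure of the $\Upsilon$--regularization supplied by Lemma~\ref{l: 317610498}. Writing the symmetric edge contribution
\[
D_g(\omega,\omega'):=\pi(\omega)Q(\omega,\omega')(g(\omega)-g(\omega'))\log\frac{g(\omega)}{g(\omega')},
\]
observe that on edges where $f_\Upsilon$ coincides with $f$ at both endpoints, $D_{f_\Upsilon}=D_f$. The task therefore reduces to bounding $D_{f_\Upsilon}$ on \emph{active} edges --- those for which $f_\Upsilon(\omega)>f(\omega)$ at at least one endpoint.

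On an active edge $(\omega,\omega')$, the $\Upsilon$--regularity of $f_\Upsilon$ gives immediately
\[
D_{f_\Upsilon}(\omega,\omega')\leq \pi(\omega)Q(\omega,\omega')(\Upsilon-1)\min\bigl(f_\Upsilon(\omega),f_\Upsilon(\omega')\bigr)\log\Upsilon.
\]
To find a matching contribution on the $f$-side, I would apply Lemma~\ref{l: 317610498} at the endpoint where the regularization acts, producing a geodesic $P=(P[0],\ldots,P[L])$ from that endpoint to some $\tilde\omega=P[L]$ with $f_\Upsilon(\tilde\omega)=f(\tilde\omega)$, along which $f_\Upsilon$ multiplies by exactly $\Upsilon$ at each step. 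The terminal ``boundary'' edge $(P[L-1],P[L])$ necessarily carries a substantial contribution to $\Dir_\pi(\log f,f)$, because the inequality $f(P[L-1])\leq f_\Upsilon(P[L-1])=f(\tilde\omega)/\Upsilon$ forces $f$ to drop by at least a factor $\Upsilon$ across it. Charging $D_{f_\Upsilon}(\omega,\omega')$ to $D_f(P[L-1],P[L])$ and absorbing ratios of stationary weights and transition rates via $\gamma$ and the reciprocal of the minimum non-zero off-diagonal value $Q_{\min}$, one obtains a charge ratio bounded by roughly $\gamma\,Q_{\min}^{-1}\,\Upsilon^{-(L-1)}$, the decay factor $\Upsilon^{-(L-1)}$ stemming from the geometric growth of $f_\Upsilon$ along the geodesic.

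The remaining step is to show no boundary edge is overcharged. For a fixed boundary edge, the number of active edges whose associated geodesic terminates at it at length $L$ is at most $\Delta^L$, where $\Delta$ bounds the maximum degree of the graph induced by $Q$. The total charge received by a single boundary edge is thus controlled by a geometric series $(\gamma/Q_{\min})\cdot \Upsilon\cdot \sum_{L\geq 1}(\Delta/\Upsilon)^L$, which is $O(1)$ whenever $\Upsilon\gtrsim \gamma\Delta/Q_{\min}$; the definition of $\Upsilon$ in~\eqref{eq: m 05623058} is precisely calibrated (with slack) for this to close. The main obstacle will be the bookkeeping of the charging map: when $f_\Upsilon$ exceeds $f$ at both endpoints of an active edge, a consistent choice of which endpoint's geodesic to follow must be made so that charges accumulate in a controlled manner, and the various constant pre-factors must all absorb into the constant $16$ and $\gamma^2$ appearing in the definition of $\Upsilon$.
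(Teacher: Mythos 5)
Your overall scheme---term-by-term comparison, equality on inactive edges, charging each active edge along its regularization geodesic to the ``boundary'' edge incident to the source vertex $\tilde\omega$, and closing via a geometric series calibrated by the definition of $\Upsilon$---is exactly the mechanism of the paper's proof. The genuine gap is quantitative and occurs at your very first estimate. The crude bound
$\pi(\omega)Q(\omega,\omega')\big(f_{\Upsilon}(\omega)-f_{\Upsilon}(\omega')\big)\log\frac{f_{\Upsilon}(\omega)}{f_{\Upsilon}(\omega')}\le \pi(\omega)Q(\omega,\omega')(\Upsilon-1)\min\big(f_{\Upsilon}(\omega),f_{\Upsilon}(\omega')\big)\log\Upsilon$
is lossier by a factor of order $\Upsilon$ than the charging scheme can afford: with $f_{\Upsilon}(\omega)=f(\tilde\omega)\Upsilon^{-L}$ it yields, as you state, a charge ratio of order $\gamma\,Q_{\min}^{-1}\,\Upsilon^{-(L-1)}$ against the boundary edge, and then the total received by a single boundary edge is $\sum_{L\ge1}\Delta^{L}\,\gamma\,Q_{\min}^{-1}\,\Upsilon^{-(L-1)}\approx \gamma\Delta/Q_{\min}$, where $\Delta=\max_{\omega}|\{\omega'\neq\omega:\,Q(\omega,\omega')\neq0\}|$. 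This is \emph{not} $O(1)$, and no choice of $\Upsilon$ rescues it: even granting $Q(\omega,\omega')\le1$, the $L=1$ term alone contributes $\approx\gamma/Q_{\min}$ independently of $\Upsilon$ (for the switch chain this is polynomial in $n$). So your argument as written only gives $\Dir_\pi(\log f_{\Upsilon},f_{\Upsilon})\lesssim(\gamma\Delta/Q_{\min})\,\Dir_\pi(\log f,f)$, far from the universal constant claimed in the lemma; the assertion that the series is $O(1)$ whenever $\Upsilon\gtrsim\gamma\Delta/Q_{\min}$ is an arithmetic slip.

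The missing idea is to use \emph{both} endpoints of the active edge, which is what the paper does. Follow the geodesic from the endpoint with the larger regularized value (this also settles the ``consistent choice'' you left open, and here the choice matters). Then $f_{\Upsilon}(\omega)=f(\tilde\omega)\Upsilon^{-L}$, and since $\dist(\omega',\hat\omega)\le L$ for the penultimate vertex $\hat\omega$ of the geodesic, also $f_{\Upsilon}(\omega')\ge f(\hat\omega)\Upsilon^{-L}$, whence
\[
\big(f_{\Upsilon}(\omega)-f_{\Upsilon}(\omega')\big)\log\frac{f_{\Upsilon}(\omega)}{f_{\Upsilon}(\omega')}
\le \Upsilon^{-L}\big(f(\tilde\omega)-f(\hat\omega)\big)\log\frac{f(\tilde\omega)}{f(\hat\omega)}.
\]
This restores the extra factor $\Upsilon^{-1}$: the per-edge charge ratio becomes of order $\gamma\,Q_{\min}^{-1}\,\Upsilon^{-L}$, and the geometric series totals $\lesssim \gamma\Delta/(Q_{\min}\Upsilon)\le (16\gamma)^{-1}$ by the definition \eqref{eq: m 05623058} of $\Upsilon$, which closes the proof. (The paper performs this bookkeeping at the vertex $\tilde\omega$, through the quantity $V(\tilde\omega)$ and the estimate \eqref{eq: 02958104982174}, rather than per boundary edge as you propose; that difference is cosmetic, but the two-sided per-edge estimate above is essential.)
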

\begin{proof}
For brevity, denote
$$
u:=\min\limits_{\omega\neq\omega':\, Q(\omega,\omega')\neq 0}Q(\omega,\omega'),
$$
and
$$
V(\tilde\omega):=\sum_{\omega'':\;\omega''\neq\tilde\omega}Q(\tilde\omega, \omega'')\big(f(\tilde\omega)-f(\omega'')\big)\log\frac{f(\tilde\omega)}{f(\omega'')},
\quad \tilde\omega\in\Omega.
$$
Observe that, in view of the definition of $\Upsilon$, we have for any $\tilde\omega\in\Omega$:
\begin{equation}\label{eq: 02958104982174}
\sum_{\omega:\;\omega\neq\tilde\omega}
\frac{\pi(\omega)}{u\,\Upsilon^{\dist(\tilde \omega,\omega)}}\leq \sum_{\omega:\;\omega\neq\tilde\omega}
\frac{\gamma \pi(\tilde \omega)}{u\,\Upsilon^{\dist(\tilde \omega,\omega)}}\leq \frac{\pi(\tilde \omega)}{8\gamma}.
\end{equation}

Fix any pair of adjacent vertices $\omega,\omega'$ of $G_{\Omega,Q}:=(\Omega,\{(w,w'):\,w\neq w',\,Q(w,w')\neq 0\})$. Without loss of generality, we can assume that
$f(\omega)\geq f(\omega')$. We shall consider three cases.
\begin{itemize}
\item $f(\omega)=f_{\Upsilon}(\omega)\geq f_{\Upsilon}(\omega')$. Since $f_{\Upsilon}(\omega')\geq f(\omega')$, in this case we have
$$
\big(f_{\Upsilon}(\omega)-f_{\Upsilon}(\omega')\big)\log\frac{f_{\Upsilon}(\omega)}{f_{\Upsilon}(\omega')}\leq
\big(f(\omega)-f(\omega')\big)\log\frac{f(\omega)}{f(\omega')}.
$$
\item $f(\omega)<f_{\Upsilon}(\omega)$ and $f_{\Upsilon}(\omega)\geq f_{\Upsilon}(\omega')$.
In this case necessarily there is a vertex $\tilde\omega=\Rel_{f}(\omega)$ with
$$
f_{\Upsilon}(\omega)=\frac{f(\tilde\omega)}{\Upsilon^{\dist(\tilde \omega,\omega)}}=\frac{f_{\Upsilon}(\tilde\omega)}{\Upsilon^{\dist(\tilde \omega,\omega)}};
$$
moreover, there is a vertex $\hat \omega$ adjacent to $\tilde \omega$ and with $\dist(\hat\omega,\omega)=\dist(\tilde \omega,\omega)-1$
such that $f_{\Upsilon}(\hat\omega)=\frac{1}{\Upsilon}\,f(\tilde\omega)$ (see Lemma~\ref{l: 317610498}). Note that 
$f_{\Upsilon}(\omega')\geq \frac{f(\hat\omega)}{\Upsilon^{\dist(\omega',\hat \omega)}}\geq \frac{f(\hat\omega)}{\Upsilon^{\dist(\tilde \omega, \omega)}}$.
Using this, we can write 
\begin{align*}
\big(f_{\Upsilon}(\omega)-f_{\Upsilon}(\omega')\big)\log\frac{f_{\Upsilon}(\omega)}{f_{\Upsilon}(\omega')}
&\leq \frac{1}{\Upsilon^{\dist(\tilde \omega,\omega)}}\big(f(\tilde\omega)-f(\hat\omega)\big)\log\frac{f(\tilde\omega)}{f(\hat\omega)}.
\end{align*}

\item $f_{\Upsilon}(\omega)< f_{\Upsilon}(\omega')$. Similarly to the previous case, there is 
a vertex $\tilde\omega$ with
$$
f_{\Upsilon}(\omega')=\frac{f(\tilde\omega)}{\Upsilon^{\dist(\tilde \omega,\omega')}}=\frac{f_{\Upsilon}(\tilde\omega)}{\Upsilon^{\dist(\tilde \omega,\omega')}},
$$
and there is a vertex $\hat \omega$ adjacent to $\tilde \omega$ and with $\dist(\hat\omega,\omega')=\dist(\tilde \omega,\omega')-1$
such that $f_{\Upsilon}(\hat\omega)=\frac{1}{\Upsilon}\,f(\tilde\omega)$.
Hence,
\begin{align*}
\big(f_{\Upsilon}(\omega)-f_{\Upsilon}(\omega')\big)\log\frac{f_{\Upsilon}(\omega)}{f_{\Upsilon}(\omega')}
&\leq \frac{1}{\Upsilon^{\dist(\tilde \omega,\omega')}}\big(f(\tilde\omega)-f(\hat\omega)\big)\log\frac{f(\tilde\omega)}{f(\hat\omega)}
\end{align*}
\end{itemize}

Summing the above estimates and using the chain reversibility, we obtain
\begin{align*}
&\sum_{\omega'\neq\omega}\pi(\omega) Q(\omega,\omega')\big(f_{\Upsilon}(\omega)-f_{\Upsilon}(\omega')\big)\log\frac{f_{\Upsilon}(\omega)}{f_{\Upsilon}(\omega')}\\
&\hspace{1cm}\leq \sum_{\omega'\neq\omega}\pi(\omega)Q(\omega,\omega')\big(f(\omega)-f(\omega')\big)\log\frac{f(\omega)}{f(\omega')}
+\sum_{\omega'\neq\omega}\;
\sum_{\tilde\omega:\;\tilde\omega\neq \omega}\frac{4\pi(\omega) Q(\omega,\omega')}{u\,\Upsilon^{\dist(\tilde \omega,\omega)}}\,V(\tilde\omega)\\
&\hspace{1cm}\leq 2\Dir_\pi(\log f,f)
+\sum_{\tilde\omega\in\Omega} \;\sum_{\omega:\;\omega\neq\tilde\omega}
\frac{4\pi(\omega)}{u\,\Upsilon^{\dist(\tilde \omega,\omega)}}\,  V(\tilde\omega)\\
&\hspace{1cm}\leq 2 \Dir_\pi(\log f,f) +\frac{1}{\gamma} \Dir_\pi(\log f,f),
\end{align*}
where the last inequality follows after using \eqref{eq: 02958104982174}.
\end{proof}

\bigskip

In order to verify a counterpart comparison inequality for the entropies,
we need the following simple 
relaxation of the duality formula of the entropy.
\begin{lemma}\label{l: 391873049817}
Let $(\Omega,\pi)$ be a finite probability space and $f$ be a positive function on $\Omega$. Then
$$
\Ent_\pi(f)\leq 2 \sup\big\{\Exp_{\pi}[f\tilde h] \text{ with } \tilde h:\Omega \to \R \text{ satisfying } \Exp_\pi [e^{\tilde h}]=1\mbox{ and }
\tilde h\geq \log(1/2)\big\}.
$$
\end{lemma}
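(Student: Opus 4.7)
The plan is to construct an explicit candidate $\tilde h$ rather than argue abstractly from duality, one that meets both constraints $\Exp_\pi e^{\tilde h}=1$ and $\tilde h\ge\log(1/2)$ while achieving $\Exp_\pi[f\tilde h]\ge\frac{1}{2}\Ent_\pi(f)$. The optimal unconstrained choice in the Gibbs variational formula is $h^{\ast}=\log(f/\Exp_\pi f)$, which can of course be arbitrarily negative, so some form of truncation is unavoidable. I anticipate that the main subtlety will be that a \emph{hard} truncation of $h^{\ast}$ at level $-\log 2$ forces an additive Lagrange correction to restore the normalization $\Exp_\pi e^{\tilde h}=1$, and that correction immediately pushes the truncated function back below $-\log 2$. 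My plan is to sidestep this entirely by using a \emph{soft} truncation obtained by averaging $f$ with the constant function~$1$.

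First I would reduce to the case $\Exp_\pi f=1$ using the homogeneity $\Ent_\pi(cf)=c\,\Ent_\pi(f)$. Then I would set
$$
\tilde h:=\log\Bigl(\frac{f+1}{2}\Bigr).
$$
Both constraints are now automatic: $\Exp_\pi e^{\tilde h}=\Exp_\pi[(f+1)/2]=(1+1)/2=1$, and $(f+1)/2\ge 1/2$ pointwise gives $\tilde h\ge\log(1/2)$. So $\tilde h$ is a legitimate competitor in the supremum.

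The remaining step would be a pointwise application of concavity of $\log$ to the pair $(f,1)$:
$$
\log\Bigl(\frac{f+1}{2}\Bigr)\;\ge\;\frac{\log f+\log 1}{2}\;=\;\frac{\log f}{2}.
$$
Multiplying by $f\ge 0$ and integrating against $\pi$ yields $\Exp_\pi[f\tilde h]\ge\tfrac12\Exp_\pi[f\log f]=\tfrac12\Ent_\pi(f)$, which inserts into the supremum and completes the proof.

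No step looks technically hard; the only things worth handling with care are the degenerate case $f\equiv 0$ (covered by the convention $0\log 0=0$) and the scaling back to general $f$, for which one simply replaces $\tilde h$ by $\log((f+\Exp_\pi f)/(2\Exp_\pi f))$. The real content of the argument is the selection of the interpolant $(f+1)/2$, which keeps $\Exp_\pi e^{\tilde h}=1$ and $\tilde h\ge\log(1/2)$ simultaneously in a way that no additive shift of a hard truncation can.
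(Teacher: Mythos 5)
Your proof is correct, and it is essentially the paper's argument: the key construction $e^{\tilde h}=\frac{e^{h}+1}{2}$ (soft truncation preserving $\Exp_\pi e^{\tilde h}=1$ and giving $\tilde h\geq\log(1/2)$, $\tilde h\geq h/2$) is exactly what the paper uses. The only difference is that you instantiate it directly at the optimizer $h=\log f$ (after normalizing $\Exp_\pi f=1$) and verify $\Exp_\pi[f\tilde h]\geq\frac12\Exp_\pi[f\log f]$ by AM--GM, whereas the paper applies the same transformation to an arbitrary competitor $h$ with $\Exp_\pi e^{h}=1$ and then invokes the variational formula for entropy; your version is marginally more self-contained, the paper's marginally more general, but the content is the same.
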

\begin{proof}
Let $h$ be any function on $\Omega$ with $\Exp_\pi [e^{h}]=1$.
Define $\tilde h$ via the relation
$$
\exp(\tilde h)=\frac{\exp(h)+1}{2}.
$$
Clearly, $\Exp_\pi [e^{\tilde h}]=1$ and $\tilde h\geq \log(1/2)$.
At the same time, it is easy to check that
$$
\tilde h(\omega)\geq h(\omega)/2
$$
for every $\omega\in \Omega$, whence
$$
\Exp_{\pi}[f\tilde h] \geq \frac{1}{2}\Exp_{\pi}[fh]. 
$$
Applying the variational formula for the entropy \cite[Lemma~3.15]{vanHandel-course}, we get the result.
\end{proof}

We are now ready to prove a lemma which, together with Lemma~\ref{l: 910497219287}, yields Theorem~\ref{th: MLSI=reg-MLSI}:
\begin{lemma}
Let $Q$ be a reversible Markov generator on a finite probability space $(\Omega, \pi)$, 
$f$ be a positive function on $\Omega$, and let $\Upsilon$ be defined
according to \eqref{eq: m 05623058}. Further, let $f_{\Upsilon}$ be the $\Upsilon$--regularization of $f$. Then
$$
{\rm Ent}_\pi(f_{\Upsilon})\geq c\,{\rm Ent}_\pi(f),
$$
where $c>0$ is a universal constant.
\end{lemma}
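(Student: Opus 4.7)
The plan is to apply the standard entropy duality formula to $f_\Upsilon$ with a test function tailored to $f$.  By the homogeneity of the claim under $f\mapsto\lambda f$, we may assume $\Exp_\pi f=1$.  For any $\tilde h$ with $\Exp_\pi[e^{\tilde h}]\le 1$ one has $\Ent_\pi(f_\Upsilon)\ge \Exp_\pi[f_\Upsilon\tilde h]$, so the task is to produce a $\tilde h$ for which $\Exp_\pi[f\tilde h]$ captures a constant fraction of $\Ent_\pi f$ while the perturbation $\Exp_\pi[(f_\Upsilon-f)\tilde h]$ is tightly controlled.

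The natural choice, suggested by the construction in the proof of Lemma~\ref{l: 391873049817} applied to $h=\log f$, is $\tilde h=\log\bigl((f+1)/2\bigr)$.  A direct verification gives $\Exp_\pi[e^{\tilde h}]=1$, $\tilde h\ge \log(1/2)$, and (from $(\sqrt f-1)^2\ge 0$) the pointwise inequality $\tilde h\ge \tfrac{1}{2}\log f$, which yields $\Exp_\pi[f\tilde h]\ge \tfrac12\Ent_\pi f$.  Decomposing $\Exp_\pi[f_\Upsilon\tilde h]=\Exp_\pi[f\tilde h]+\Exp_\pi[(f_\Upsilon-f)\tilde h]$, one observes that on $\{f\ge 1\}$ the second integrand is non-negative (both factors are), while on $\{f<1\}$ the bound $|\tilde h|\le \log 2$ combines with the key estimate
\[
\Exp_\pi[f_\Upsilon-f]\le \tfrac{1}{8\gamma}\Exp_\pi f,
\]
which follows from the pointwise majorization $f_\Upsilon(\omega)\le\sum_{\omega'} f(\omega')/\Upsilon^{\dist(\omega,\omega')}$ (the max is bounded by the sum of non-negative summands), Fubini, and inequality~\eqref{eq: 02958104982174} already recorded in the proof of Lemma~\ref{l: 910497219287}.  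Assembling the pieces produces the additive bound
\[
\Ent_\pi(f_\Upsilon)\ge \tfrac{1}{2}\Ent_\pi f-\tfrac{\log 2}{8\gamma}\Exp_\pi f.
\]

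The main obstacle is converting this additive estimate into the purely multiplicative bound claimed.  In the regime $\Ent_\pi f\gtrsim \Exp_\pi f/\gamma$ the error term is absorbed and one obtains $\Ent_\pi(f_\Upsilon)\ge c\,\Ent_\pi f$ immediately.  For the complementary small-entropy regime I would sharpen the bound on $\Exp_\pi[(f_\Upsilon-f)\tilde h]$ by indexing each $\omega\in\{f_\Upsilon>f\}$ through its source $\omega^\ast=\Rel_f(\omega)$ (cf.~Lemma~\ref{l: 317610498}) and summing the contributions grouped by $\omega^\ast$: this transfers each excess $f_\Upsilon(\omega)-f(\omega)\le f(\omega^\ast)/\Upsilon^{\dist(\omega,\omega^\ast)}$ back to the high-$f$ vertex responsible for it, where the entropic weight $f(\omega^\ast)\log f(\omega^\ast)$ already contributes to $\Ent_\pi f$.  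The geometric decay $\Upsilon^{-\dist(\omega,\omega^\ast)}$, together with the definition of $\Upsilon$, should then produce the desired universal-constant comparison.  I expect this combinatorial bookkeeping to be the technical heart of the argument.
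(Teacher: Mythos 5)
Your first half is correct and coincides with the paper's opening moves: the duality with a test function bounded below by $\log(1/2)$ (so that $\Exp_{\pi}[f\tilde h]\geq\frac12\Ent_\pi f$), the decomposition $\Exp_\pi[f_\Upsilon\tilde h]=\Exp_\pi[f\tilde h]+\Exp_\pi[(f_\Upsilon-f)\tilde h]$, and the use of \eqref{eq: 02958104982174} after regrouping the excess $f_\Upsilon-f$ by the source vertices $\Rel_f(\omega)$. But the step you defer --- converting the additive estimate $\Ent_\pi(f_\Upsilon)\geq\frac12\Ent_\pi f-\frac{c}{\gamma}\Exp_\pi f$ into the multiplicative claim --- is exactly where the content of the lemma lies, and your sketch for the small-entropy regime does not close it. The assertion that the excess can be charged to $\Ent_\pi f$ because ``the entropic weight $f(\omega^\ast)\log f(\omega^\ast)$ already contributes to $\Ent_\pi f$'' is not valid as stated: individual terms $\pi(\omega^\ast)f(\omega^\ast)\log f(\omega^\ast)$ do not lower-bound the entropy (they can even be negative, since $f(\omega^\ast)\geq\Upsilon f(\omega)$ does not force $f(\omega^\ast)\geq 1$), and in general there is no lower bound on $\Ent_\pi f$ in terms of $\sum_{\omega^\ast\in{\rm Im}\,\Rel_f}\pi(\omega^\ast)f(\omega^\ast)$ without using more structure. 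Note also that $\gamma$ may equal $1$, so the prefactor $1/(8\gamma)$ gives no help in the regime $\Ent_\pi f\ll\Exp_\pi f$.

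What the paper does at this point, and what is missing from your argument, is a second duality step with a purpose-built two-valued test function. After bounding the excess by $\frac{1}{8\gamma}\sum_{\tilde\omega\in{\rm Im}\,\Rel_f}\pi(\tilde\omega)f(\tilde\omega)$ (i.e.\ restricting to the image of $\Rel_f$ rather than all of $\Omega$, which your bound $\frac{1}{8\gamma}\Exp_\pi f$ discards), one chooses a transversal $T\subset\{f_\Upsilon>f\}$ with $|T|=|{\rm Im}\,\Rel_f|$ and $\Rel_f(T)={\rm Im}\,\Rel_f$, and exploits the two structural facts $f(\omega)\leq\frac{1}{16\gamma}f(\Rel_f(\omega))$ for $\omega\in T$ and $\pi({\rm Im}\,\Rel_f)\leq\gamma\,\pi(T)$. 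Setting $h=\frac{1}{2\gamma}$ on ${\rm Im}\,\Rel_f$, $h=-4$ on $T$, and $h=0$ elsewhere, one checks $\Exp_\pi[e^h]\leq 1$, and the pointwise comparison between $T$ and its image gives
\begin{equation*}
\frac{1}{8\gamma}\sum_{\tilde\omega\in{\rm Im}\,\Rel_f}\pi(\tilde\omega)f(\tilde\omega)\;\leq\;\frac12\,\Exp_\pi[fh]\;\leq\;\frac12\,\Ent_\pi f,
\end{equation*}
so the excess is controlled by the entropy itself rather than by $\Exp_\pi f$, and the multiplicative bound follows with no case split. The point is that whenever the excess term is large, $f$ must fluctuate by a factor at least $\Upsilon\geq 16\gamma$ between $T$ and ${\rm Im}\,\Rel_f$, which itself forces entropy; this quantitative mechanism is the idea your proposal lacks, and without it (or an equivalent substitute) the proof is incomplete precisely in the regime where the lemma is nontrivial.
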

\begin{proof}
In view of Lemma~\ref{l: 391873049817}, we can find a function $\tilde h$ on $\Omega$ with
$\Exp_\pi [e^{\tilde h}]=1$ and $\tilde h\geq \log(1/2)$, such that
$$
\Exp_{\pi}[f\tilde h]\geq \frac{1}{2}\,\Ent_\pi(f).
$$
Denote the domain of $\Rel_{f,\Omega,\Upsilon}$ by $S$:
$$
S:=\big\{\omega\in\Omega:\;f_{\Upsilon}(\omega)>f(\omega)\big\}.
$$
We clearly have 
\begin{align}
\Exp_{\pi}[f_{\Upsilon}\tilde h]
&=\Exp_{\pi}[f\tilde h]+\sum_{\omega\in S}\pi(\omega) \big(f_{\Upsilon}(\omega)-f(\omega)\big)\tilde h(\omega)\nonumber\\
&\geq
\Exp_{\pi}[f\tilde h]-\log(2)\sum_{\omega\in S}\pi(\omega) f_{\Upsilon}(\omega)\nonumber\\
&\geq \frac12 \,\Ent_\pi(f) -\log(2)\sum_{\omega\in S}\pi(\omega) f_{\Upsilon}(\omega).\label{eq1: entropy-reg}
\end{align}
On the other hand, using the definition of $f_{\Upsilon}$, $\Rel_{f,\Omega,\Upsilon}$, and relation~\eqref{eq: 02958104982174}, we get
$$
\sum_{\omega\in S}\pi(\omega)f_{\Upsilon}(\omega)
\leq \sum_{\tilde\omega\in {\rm Im}\,\Rel_{f,\Omega,\Upsilon}}
\sum_{\omega:\;\omega\neq \tilde\omega}\frac{\pi(\omega) f(\tilde\omega)}{\Upsilon^{\dist(\omega,\tilde\omega)}}
\leq \frac{1}{8\gamma}\sum_{\tilde\omega\in {\rm Im}\,\Rel_{f,\Omega,\Upsilon}}\pi(\tilde \omega) f(\tilde\omega).
$$
Pick a subset $T\subset\{\omega:\;f_{\Upsilon}(\omega)>f(\omega)\}$ of cardinality $|{\rm Im}\,\Rel_{f}|$
such that $\Rel_{f}(T)={\rm Im}\,\Rel_{f}$.
Observe that $f(\omega)\leq \frac{1}{\Upsilon}f(\Rel_f(\omega))\leq \frac{1}{16 \gamma}f(\Rel_f(\omega))$ for every $\omega\in T$. 
Using this, we can write 
\begin{align}
\frac{1}{8\gamma}\sum_{\tilde\omega\in {\rm Im}\,\Rel_{f,\Omega,\Upsilon}}\pi(\tilde \omega) f(\tilde\omega)
&=\frac{1}{4\gamma}\sum_{\tilde\omega\in {\rm Im}\,\Rel_{f,\Omega,\Upsilon}}\pi(\tilde \omega) f(\tilde\omega)- 
\frac{1}{8\gamma}\sum_{\omega \in T} \pi(\Rel_f(\omega)) f(\Rel_f(\omega))\nonumber\\
&\leq \frac{1}{4\gamma}\sum_{\tilde\omega\in {\rm Im}\,\Rel_{f,\Omega,\Upsilon}}\pi(\tilde \omega) f(\tilde\omega)- 
2\sum_{\omega \in T} \pi(\omega) f(\omega)\nonumber\\
&\leq \frac12\, \Exp_{\pi}[f h],\label{eq2: entropy-reg}
\end{align}
where 
$$
h(\omega)=\begin{cases}
\frac{1}{2\gamma},&\mbox{if $\omega\in {\rm Im}\,\Rel_{f}$};\\
-4,&\mbox{if $\omega\in T$};\\
0,&\mbox{otherwise.}
\end{cases}
$$
Using that $\frac{\pi({\rm Im}\,\Rel_{f})}{\pi(T)}\leq \gamma$, it is easy to check that $\Exp_\pi [e^h]\leq 1$. Thus, by the variational formula of the entropy \cite[Lemma~3.15]{vanHandel-course}, we deduce that 
$ \Exp_{\pi}[f h]\leq \Ent_\pi(f)$. 
Using this, together with \eqref{eq1: entropy-reg} and \eqref{eq2: entropy-reg}, we finish the proof.
\end{proof}

\section{A comparison technique for MLSI}\label{sec: mlsi-comparison}

The goal of this section is to prove Theorem~\ref{th: comparison-mlsi}. 
A crucial role in comparison techniques for Markov chains is played by the {\it canonical path} method. In its most general setting, we are given a collection of paths $\mathcal P$ on $(\Omega,\pi)$
and a collection of non-negative weights $\mathcal W=(w_{P})_{P\in \mathcal P}$ indexed
over the paths, and would like to bound the weighted sum
$$
\sum_{P\in\mathcal P}w_P\,\big(f(P[\length{P}])-f(P[0])\big)\log\frac{f(P[\length{P}])}{f(P[0])}
$$
from above in terms of $\Dir_\pi(\log f,f)$.
Note that, unlike in the case of squares of differences which are dealt with in the context of the Poincar\'e or log-Sobolev inequalities,
the expression
$$(f(P[\length{P}])-f(P[0]))\log\frac{f(P[\length{P}])}{f(P[0])}$$
does not
split into corresponding quantities for adjacent points in the path, unless some assumptions on $f$ are imposed.
Indeed even in the situation when $\length{P}=2$, the above quantity can be arbitrarily large compared to
$$
(f(P[1])-f(P[0]))\log\frac{f(P[1])}{f(P[0])}+(f(P[2])-f(P[1]))\log\frac{f(P[2])}{f(P[1])}
$$
(for example, taking $f(P[2])=1$, $f(P[0])=\varepsilon$ and $f(P[1])=(\log 1/\varepsilon)^{-1}$,
we clearly get that $(f(P[2])-f(P[0]))\log\frac{f(P[2])}{f(P[0])}=\Theta(\log 1/\varepsilon)$
while $(f(P[1])-f(P[0]))\log\frac{f(P[1])}{f(P[0])}+(f(P[2])-f(P[1]))\log\frac{f(P[2])}{f(P[1])}=\Theta(\log\log 1/\varepsilon)$
when $\varepsilon\to 0$).

However, when the function $f$ is $r$--regular in the sense introduced in this paper, the following simple estimate holds:
\begin{lemma}\label{lem: path-decomp}
Let $S$ be a finite set, $(x_i)_{0\leq i\leq T}$ be a sequence of elements (not necessarily distinct) in $S$ and let $f:\, S\to \R_+$ be a function such that $\max\big(\frac{f(x_i)}{f(x_{i-1})},
\frac{f(x_{i-1})}{f(x_i)}\big)\leq r$, $1\leq i\leq T$, for some $r\geq 2$. 
Then
$$\big(f(x_T)-f(x_0)\big)\log\frac{f(x_T)}{f(x_0)}\leq C\, \big(1+(T-1)^2\,\log(r)\big)\;
\sum_{t=1}^{T}\big(f(x_t)-f(x_{t-1})\big)\log\frac{f(x_t)}{f(x_{t-1})},
$$
where $C>0$ is a universal constant.
\end{lemma}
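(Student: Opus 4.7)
Set $y_t := f(x_t)$, $\Delta_t := \log(y_t/y_{t-1})$, $b_t := y_t - y_{t-1}$, and $\Phi_t := b_t\Delta_t \ge 0$; note that $|\Delta_t|\le \log r$ and that $b_t$, $\Delta_t$ always have the same sign. By symmetry in the sequence (the quantities on both sides are unchanged under reversal) I may assume $y_T \ge y_0$, so that $\sum_t b_t \ge 0$ and $\sum_t \Delta_t \ge 0$.

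The starting identity is obtained by telescoping both factors of the left-hand side:
$$
(y_T - y_0)\log(y_T/y_0) \;=\; \Bigl(\sum_{s=1}^T b_s\Bigr)\Bigl(\sum_{t=1}^T \Delta_t\Bigr) \;=\; \sum_{t=1}^T \Phi_t \;+\; \sum_{s\ne t} b_s\Delta_t.
$$
The diagonal piece is exactly the right-hand side of the lemma. The task therefore reduces to showing
$$
\sum_{s\ne t} b_s\Delta_t \;\le\; C\,(T-1)^2\log r \,\sum_{t} \Phi_t.
$$

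For a single off-diagonal term I would apply the weighted AM--GM inequality $|b_s\Delta_t| \le \tfrac12(\alpha|b_s|^2 + |\Delta_t|^2/\alpha)$ with the balancing choice $\alpha = |\Delta_s|/|b_s|$, which produces $\tfrac12 \Phi_s$ for the first summand (to be absorbed into the diagonal) and a residual term involving the logarithmic mean $L(y_{s-1},y_s)=|b_s|/|\Delta_s|$ for the second. The pointwise bound $|\Delta_t|\le \log r$ then yields
$$
|b_s\Delta_t| \;\le\; \tfrac12\Phi_s \;+\; \tfrac12 (\log r)^2 \, L(y_{s-1},y_s).
$$
Summing over the $(T-1)^2$-many off-diagonal pairs gives a first contribution of order $(T-1)\sum \Phi_t$, while the second contribution is controlled via Cauchy--Schwarz together with the identity $b_s^2 = \Phi_s\, L(y_{s-1},y_s)$, relating the log-means $L(y_{s-1},y_s)$ back to the $\Phi_s$ after another use of the regularity. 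The combinatorial factor $(T-1)$ from summing over index pairs, multiplied by another $(T-1)\log r$ coming from this second Cauchy--Schwarz step, produces the announced factor $(T-1)^2\log r$.

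The main obstacle is the second step: the logarithmic means $L(y_{s-1},y_s)$ can in principle vary by factors as large as $r^{T}$ along the path, and a naive estimate would introduce such an exponential blow-up. Overcoming this requires using the regularity $|\Delta_t|\le \log r$ in a more structural way than through the trivial bound above --- for instance by pairing distant indices $s,t$ via the path of intermediate $y_u$'s and exploiting that $L(y_{s-1},y_s)$ and $L(y_{t-1},y_t)$ differ by at most $r^{|s-t|}$, then averaging this control with Cauchy--Schwarz to recover only a polynomial dependence on $T$ times $\log r$. This is the key technical step where the structure ``$r$-regularity implies polynomial in $T$ comparison'' is used.
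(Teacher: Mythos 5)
Your reduction to the cross terms is fine as far as it goes: the telescoping identity $(y_T-y_0)\log(y_T/y_0)=\sum_t\Phi_t+\sum_{s\neq t}b_s\Delta_t$ is correct, and it would indeed suffice to bound $\sum_{s\neq t}b_s\Delta_t$ by $C(T-1)^2\log r\sum_t\Phi_t$. But the step that is supposed to do this is exactly where the argument stops being a proof. The per-pair AM--GM bound $|b_s\Delta_t|\le\frac12\Phi_s+\frac12(\log r)^2L(y_{s-1},y_s)$ leaves residual terms $(\log r)^2L(y_{s-1},y_s)$ that are simply not controlled by $\sum_t\Phi_t$: the logarithmic mean $L(y_{s-1},y_s)$ is of the order of $y_s$ itself and stays bounded away from zero even when the step is (nearly) flat, whereas $\Phi_s$ vanishes quadratically in the step size. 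So after summing over pairs this route cannot close, no matter how the constants are arranged. Your own fallback --- writing $|b_s\Delta_t|=\sqrt{\Phi_s\Phi_t}\sqrt{L_s/L_t}$ and using $L_s/L_t\le r^{|s-t|+1}$ --- reintroduces factors $r^{\Theta(|s-t|)}$, and no amount of Cauchy--Schwarz or averaging over pairs turns an exponential-in-$T$ factor into the claimed polynomial factor $(T-1)^2\log r$; "averaging" only helps when the bad factor appears with a compensating small weight, which you do not have. You flag this as the ``main obstacle,'' and indeed it is: the key inequality is asserted, not proved, so the proposal has a genuine gap at the decisive step. (A further caution: by passing to $|b_s\Delta_t|$ you discard the sign cancellations in $\sum_{s\neq t}b_s\Delta_t$; even if the absolute-value version of the bound happens to be true, nothing in the sketch establishes it.)

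For comparison, the paper proves the lemma without any cross-term expansion. One first reduces to the case where $f(x_T)=\max_t f(x_t)>f(x_0)$ by truncating the sequence at its argmax. Then one looks at the set $H$ of steps whose relative increment $\frac{f(x_t)}{f(x_{t-1})}-1$ is below the threshold $\frac{f(x_T)-f(x_0)}{2Tf(x_T)}$; these steps can carry at most half of the total increase $f(x_T)-f(x_0)$, so the complementary steps carry at least half of it, and on those steps $\log\frac{f(x_t)}{f(x_{t-1})}\gtrsim\frac{1-\delta^{-1}}{T}$ with $\delta=f(x_T)/f(x_0)$. This converts each increment $f(x_t)-f(x_{t-1})$ into $\Phi_t$ at the cost of a factor $O(T/(1-\delta^{-1}))$, and multiplying by $\log\delta\le T\log r$ (using monotonicity of $s\mapsto\frac{\log s}{1-s^{-1}}$) yields the $T^2\log r$ factor directly. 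If you want to salvage your plan, you would need an argument of this ``select the steps that carry the bulk of the increase'' type rather than a pairwise AM--GM.
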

\begin{proof}
Note that if $T=1$ then there is nothing to prove so we assume that $T>1$. Without loss of generality, we may also assume that $f(x_T)>f(x_0)$ and that $f(x_T)=\max_{t=0,\ldots, T} f(x_t)$. Indeed, if that was not the case, then we would let $t_0$ be such that $f(x_{t_0})=\max_{t=0,\ldots, T} f(x_t)$, then write 
$$
\big(f(x_T)-f(x_0)\big)\log\frac{f(x_T)}{f(x_0)}\leq \big(f(x_{t_0})-f(x_0)\big)\log\frac{f(x_{t_0})}{f(x_0)},
$$
and work with the truncated sequence $(x_i)_{0\leq i\leq t_0}$.

Define $H$ as the collection of all indices $t\in [T]$ such that
$$
\frac{f(x_t)}{f(x_{t-1})}-1\leq \frac{f(x_T)-f(x_0)}{2\,T\,f(x_T)},
$$
and denote by $H^c$ the complement of $H$ in $[T]$. 
Observe that
\begin{align*}
\sum_{t\in H}\big(f(x_t)-f(x_{t-1})\big)&\leq f(x_T)\,\sum_{t\in H}\bigg(\frac{f(x_t)}{f(x_{t-1})}-1\bigg)\\
&\leq \frac{1}{2}\big(f(x_T)-f(x_0)\big)\\
&=\frac{1}{2}\bigg(\sum_{t\in H}\big(f(x_t)-f(x_{t-1})\big)
+\sum_{t\in H^c}\big(f(x_t)-f(x_{t-1})\big)\bigg),
\end{align*}
whence
$$
f(x_T)-f(x_0)\leq 2\sum_{t\in H^c}\big(f(x_t)-f(x_{t-1})\big).
$$
Therefore, denoting $\delta:=\frac{f(x_T)}{f(x_0)} $, we can write 
\begin{align*}
\big(f(x_T)-f(x_0)\big)\log\frac{f(x_T)}{f(x_0)} 
&\leq 2\, \log(\delta)\,  \sum_{t\in H^c}\big(f(x_t)-f(x_{t-1})\big)\\
&\leq \frac{2\log(\delta)}{\log\big(1+\frac{(1-\delta^{-1})}{2T}\big)}\,  \sum_{t\in H^c}\big(f(x_t)-f(x_{t-1})\big)\log\frac{f(x_t)}{f(x_{t-1})},
\end{align*}
where we used that $\frac{f(x_t)}{f(x_{t-1})}\geq 1+\frac{(1-\delta^{-1})}{2T}$ when $t\in H^c$. 
Now using that $\log x \geq \frac12 (x-1)$ when $1\leq x\leq 2$, we get 
$$
\big(f(x_T)-f(x_0)\big)\log\frac{f(x_T)}{f(x_0)} 
\leq  8T\, \frac{\log(\delta)}{(1-\delta^{-1})}\,  \sum_{t\in H^c}\big(f(x_t)-f(x_{t-1})\big)\log\frac{f(x_t)}{f(x_{t-1})}.
$$
It remains to note that the function $s\to \frac{\log(s)}{(1-s^{-1})}$, $s\geq 1$, is increasing in $s$
and use that $\delta\leq r^{T}$ to finish the proof. 
\end{proof}

With this lemma in hand, the proof of Theorem~\ref{th: comparison-mlsi} will easily follow. 
\begin{proof}[Proof of Theorem~\ref{th: comparison-mlsi}]
Fix $r\geq 2$, $f \in \Reg(Q,r)$, and let $\weight$ be a $(Q, \tilde Q)$-flow.
First recall the following characterization of entropy (see \cite[Problem~3.13a]{vanHandel-course}),
\begin{equation}\label{eq: caract-entropy}
\begin{split}
\Ent_\pi(f)&= \inf_{t>0} \Exp_\pi\big[ f\log f- f\log t-f+t\big]\\
&=\inf_{t>0}\sum\limits_{\omega\in\Omega}\big(f(\omega)\log f(\omega)- f(\omega)\log t-f(\omega)+t\big)\pi(\omega)
\end{split}
\end{equation}
(with the corresponding formula for $\Ent_{\tilde\pi}(f)$),
and note that $x\log x-x\log y-x+y\geq 0$ for any $x,y>0$. The term-wise
comparison and the assumption $\pi(\omega)\leq a\,\tilde\pi(\omega)$ then yields $\Ent_\pi(f)\leq a\, \Ent_{\tilde \pi}(f)$. 
It remains to compare the two Dirichlet forms $\Dir_\pi$ and $\tilde \Dir_{\tilde \pi}$ associated with $(\pi, Q)$ and $(\tilde \pi, \tilde Q)$ respectively. 
To this aim, we write 
\begin{align*}
\tilde \Dir_{\tilde \pi}(f, \log f)&= \frac12\sum_{\omega, \omega'\in \Omega} \tilde \pi(\omega) \tilde Q(\omega, \omega') \big(f(\omega)-f(\omega')\big) \log \frac{f(\omega)}{f(\omega')}\\
&=\frac12  \sum_{\omega, \omega'\in \Omega}\sum_{P\in \path_{\omega,\omega'}} \weight(P)   \big(f(P[T])-f(P[0])\big) \log \frac{f(P[T])}{f(P[0])},
\end{align*}
where we denoted by $T=T(P)$ the length of a path $P$. 
Applying Lemma~\ref{lem: path-decomp}, we get for some universal constant $C$ that
\begin{align*}
\tilde \Dir_{\tilde \pi}(f, \log f)&\leq C\sum_{\omega, \omega'\in \Omega}\sum_{P\in \path_{\omega,\omega'}} \weight(P)  \big(1+(T-1)^2\,\log(r)\big)\;
\sum_{t=1}^{T}\big(f(x_t)-f(x_{t-1})\big)\log\frac{f(x_t)}{f(x_{t-1})}\\
&\leq 2C\,A(\weight, r)\, \Dir_\pi(f, \log f),
\end{align*}
where $A(\weight, r)$ is given by \eqref{nfaofjnpifqwnpijnfijnw}.
Putting together the above estimates, we finish the proof. 
\end{proof}

\section{MLSI for the switch chain}\label{sec: mlsi-switch}
\subsection{Preliminaries}
In this section, we establish an optimal Modified log-Sobolev Inequality
for the switch chain on regular bipartite graphs. 
We start by considering a lower bound for the MLSI constant.
\begin{prop}[Lower bound for the MLSI constant]
Let $2\leq d\leq \frac{n}{2}$. 
The modified log-Sobolev constant of $(\BipGSet_n(d), \Psimple, Q_u)$ is at least $c nd$, for some universal constant $c>0$. 
\end{prop}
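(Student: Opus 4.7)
To prove a lower bound on the MLSI constant, I need to exhibit a family of positive functions whose entropy-to-Dirichlet-form ratio is at least $cnd$. The cleanest route is to reduce the problem to a lower bound on the Poincar\'e (relaxation time) constant, using the standard linearization trick, and then to exhibit an explicit test function.

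\textbf{Step 1: Linearization.} For any non-constant $g:\BipGSet_n(d)\to\R$ with $\Exp_{\Psimple}g=0$, set $f_\varepsilon:=1+\varepsilon g$ with $\varepsilon>0$ small enough to guarantee positivity. A direct Taylor expansion gives
\[
\Ent_{\Psimple}(f_\varepsilon)=\tfrac{\varepsilon^2}{2}\Var_{\Psimple}(g)+O(\varepsilon^3),\qquad
\Dir_{\Psimple}(\log f_\varepsilon,f_\varepsilon)=\varepsilon^2\,\Dir_{\Psimple}(g,g)+O(\varepsilon^3).
\]
Since $\alpha_{MLSI}(Q_u)$ is the supremum of $\Ent_{\Psimple}(f)/\Dir_{\Psimple}(\log f,f)$ over positive $f$, letting $\varepsilon\to 0$ yields
\[
\alpha_{MLSI}(Q_u)\geq \frac{\Var_{\Psimple}(g)}{2\,\Dir_{\Psimple}(g,g)}
\qquad\text{for every centered }g.
\]
Thus it suffices to find a single $g$ with $\Var_{\Psimple}(g)/\Dir_{\Psimple}(g,g)\geq c\,nd$.

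\textbf{Step 2: Test function and variance.} Fix vertices $i_0\in[n^{(\ell)}]$, $j_0\in[n^{(r)}]$, and take $g(G):=\mathbf{1}[\{i_0,j_0\}\in E(G)]$. By the invariance of $\Psimple$ under vertex permutations, $\Pr_{\Psimple}[\{i_0,j_0\}\in E(G)]=d/n$, so
\[
\Var_{\Psimple}(g)=\tfrac{d}{n}\bigl(1-\tfrac{d}{n}\bigr)\geq \tfrac{d}{2n}
\qquad\text{since }d\le n/2.
\]

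\textbf{Step 3: Dirichlet form bound.} Using $Q_u(G,G')=\bigl(\binom{nd}{2}\bigr)^{-1}$ on neighbors and $\Psimple$ uniform,
\[
\Dir_{\Psimple}(g,g)=\frac{1}{2\binom{nd}{2}}\,\Exp_{\Psimple}\bigl[\#\{G'\in\neigh(G):g(G')\ne g(G)\}\bigr].
\]
I will upper bound the cardinality by counting switches. If $g(G)=1$, every neighbor $G'$ with $g(G')=0$ arises from a switch involving the edge $(i_0,j_0)$ paired with another edge not incident to $i_0$ or $j_0$; there are at most $nd-1$ such pairings. If $g(G)=0$, every neighbor $G'$ with $g(G')=1$ arises by swapping a pair of edges of the form $(i_0,j_1)$ and $(i_2,j_0)$; there are at most $d^2$ such pairs since $i_0$ and $j_0$ each have degree $d$. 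Plugging in the marginal probabilities $d/n$ and $1-d/n$ gives
\[
\Dir_{\Psimple}(g,g)\leq \frac{1}{2\binom{nd}{2}}\Bigl[\tfrac{d}{n}\cdot nd+d^2\Bigr]\leq \frac{C}{n^2}
\]
for a universal constant $C$.

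\textbf{Step 4: Conclusion.} Combining Steps 1--3,
\[
\alpha_{MLSI}(Q_u)\geq \frac{\Var_{\Psimple}(g)}{2\,\Dir_{\Psimple}(g,g)}\geq \frac{d/(2n)}{2C/n^2}=\frac{nd}{4C},
\]
as required. The expansion in Step 1 is routine, Step 2 uses only symmetry of the uniform measure, and Step 3 is a simple counting bound on the number of switches that can toggle a single fixed edge; no deep obstacle arises. The only mild subtlety is to make sure the Taylor remainder terms in Step 1 are uniformly controlled (which follows since $g$ is bounded and $\varepsilon$ can be taken arbitrarily small).
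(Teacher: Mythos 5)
Your proof is correct, and it reaches the bound by a slightly different route than the paper. The paper plugs the two-valued test function $f(G)=1+\mathbf{1}[(1,1)\in E(G)]$ directly into the MLSI functional and computes $\Ent_{\Psimple}(f)\geq c'd/n$ and $\Dir_{\Psimple}(f,\log f)\leq 2\log 2/n^2$ by exactly the same counting observation you use (at most $d^2$ switches can create a fixed missing edge). You instead first linearize, $f_\varepsilon=1+\varepsilon g$, to reduce the MLSI lower bound to the Poincar\'e-type ratio $\Var_{\Psimple}(g)/(2\,\Dir_{\Psimple}(g,g))$, and then apply the same edge-indicator test function. Both arguments are sound: your Taylor expansions of $\Ent$ and of $\Dir(\log f_\varepsilon,f_\varepsilon)$ are the standard ones (and the remainders are indeed uniform since $g$ is bounded), the symmetry argument giving $\Pr[(i_0,j_0)\in E]=d/n$ is valid, and your two-sided neighbor count (at most $nd$ switches destroying the edge, at most $d^2$ creating it) gives $\Dir_{\Psimple}(g,g)\leq C/n^2$ as claimed; note the $nd$ term contributes at the same order only because it is weighted by the marginal $d/n$, which you correctly account for. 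What the two approaches buy: yours factors through the general fact that the MLSI constant dominates (half) the inverse spectral gap, so it shows the obstruction is already present at the level of the Poincar\'e inequality and is reusable for other chains; the paper's direct computation avoids any limiting argument and yields explicit constants for a genuinely nonlinear test function, which is marginally more informative about the MLSI functional itself. Minor cosmetic point: you state the linearized bound ``for every centered $g$'' and then use an uncentered indicator; since $\Var$ and $\Dir(g,g)$ are invariant under adding constants this is harmless, but you should either center $g$ or remark on this invariance.
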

\begin{proof}
Denote by $\alpha$ the optimal MLSI constant, so that 
$$
\alpha=\sup \frac{\Ent_{\Psimple}(f)}{\Dir_{\Psimple}(f,\log f)},
$$
where the supremum is taken over all functions $f:\, \BipGSet_n(d)\to \R_+$. 
To obtain the required lower bound on $\alpha$, we shall use a test function.  
Define $f:\, \BipGSet_n(d)\to \R_+$ as 
$$
f(G)=\begin{cases}
2,& \text{ if $(1,1)$ is an edge in $G$};\\
1,& otherwise. 
\end{cases}
$$
Note that given $G\in  \BipGSet_n(d)$ with $(1,1)$ {\it not} as an edge, there are at most $d^2$ adjacent graphs $G'$ for $G$ having the edge $(1,1)$. 
Using this, we can write 
\begin{align*}
\Dir_{\Psimple}(f,\log f)&=\frac{\log 2}{2}  \sum_{\underset{f(G)=1}{G\in\BipGSet_n(d)}} \sum_{\underset{G'\sim G,\, f(G')=2}{G'\in  \BipGSet_n(d)}} \Psimple(G)Q_u(G,G')\\
&=\frac{\log 2}{nd(nd-1)}  \sum_{\underset{f(G)=1}{G\in\BipGSet_n(d)}}\Psimple(G)\,\big|\big\{G'\sim G:\; f(G')=2\big\}\big|
\leq \frac{2\log 2}{n^2}.
\end{align*}
On the other hand,
it follows from $d$-regularity that 
$$
\vert\{G\in \BipGSet_n(d):\, f(G)=2\}\vert =\frac{d}{n}\vert \BipGSet_n(d)\vert,
$$
whence $\Exp_{\Psimple}f=\frac{n+d}{n}$, and
$$
\Ent_{\Psimple}(f)
=\frac{n-d}{n}\log \frac{n}{n+d}+\frac{2d}{n}\log \frac{2n}{n+d}
\geq \frac{c'd}{n}
$$
for some universal constant $c'>0$.


Putting these estimates together, we deduce that 
$$
\alpha\geq \frac{c'nd}{2\log 2}, 
$$
and finish the proof.
\end{proof}

We further note that the lower bound $\Omega(nd)$ for the MLSI constant
can be obtained indirectly, by bounding the mixing time of the switch chain by $\Omega(nd\log n)$, and applying relation \eqref{eq: mix-mlsi}.
We include this alternative argument, which also shows that our mixing time upper bound is sharp, for completeness.
\begin{prop}[A lower bound for the mixing time]\label{prop:lower bound}
There are universal constants $C,c>0$ with the following property. Let $n\geq C$ and let $2\leq d\leq \sqrt[4]{n}$. Then the total variation mixing time $\tmix(Q_u,\frac{1}{4})$ of the switch chain $(\BipGSet_n(d), \Psimple, Q_u)$ is bounded below by $c nd\log n$. 
\end{prop}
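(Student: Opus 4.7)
The plan is a distinguishing-statistic argument designed to show that, for any initial graph $G_0\in\BipGSet_n(d)$, the statistic $Z(G):=|E(G)\cap E(G_0)|$ concentrates at very different values under $P_t(G_0,\cdot)$ and under $\Psimple$ whenever $t=o(nd\log n)$. I would first handle the stationary side: by exchangeability of potential edges under the uniform measure $\Psimple$, each fixed edge lies in $G$ with probability $d/n$, so $\Exp_{\Psimple}Z=d^2$. A second-moment computation based either on the bipartite configuration model or on McKay's switching method (valid in the regime $d\leq n^{1/4}$) yields a covariance bound of the form $\Psimple(e,e'\in G)-(d/n)^2=O(d^2/n^3)$, from which $\Var_{\Psimple}Z\leq Cd^2$ follows.

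Next, I would control the evolution of $Z$ along the chain. Since a switch is produced by selecting a pair of edges uniformly from $\binom{nd}{2}$ unordered pairs, each pair carrying rate $2/(nd(nd-1))$, any specific edge $e$ is involved in a switch at rate $\Theta(1/(nd))$. Because $Z$ can decrease only when one of the two switched edges lies in $E(G_0)$, the probability that a particular edge of $E(G_0)$ has never been touched by time $t$ is at least $\exp(-Ct/(nd))$, giving
\[
\Exp\bigl[Z(X_t)\mid X_0=G_0\bigr]\geq nd\cdot \exp\!\big(-Ct/(nd)\big).
\]
For $t\leq c\,nd\log n$ with $c>0$ sufficiently small, this lower bound is of order $d\cdot n^{1-Cc}$, which in the range $d\leq n^{1/4}$ is much larger than the stationary expectation $d^2$.

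To control fluctuations I would apply Azuma--Hoeffding to the embedded discrete jump chain: since each transition changes $Z$ by at most $2$ and the number of jumps by time $t$ is Poisson with mean $\Theta(t)$, the standard deviation of $Z(X_t)$ is of order $\sqrt{t}=O(\sqrt{nd\log n})$, which is negligible compared to $\Exp[Z(X_t)]$ in the regime above. Setting $A:=\{Z\geq\tfrac12\Exp[Z(X_t)]\}$, the dynamical concentration yields $P_t(G_0,A)\geq 1-o(1)$, while Chebyshev combined with the stationary variance bound from the first step gives $\Psimple(A)=o(1)$. Consequently $\|P_t(G_0,\cdot)-\Psimple\|_{\mathrm{TV}}>1/4$ for all $t\leq c\,nd\log n$, establishing the bound.

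The main obstacle will be the stationary second-moment estimate $\Var_{\Psimple}Z\leq Cd^2$ in the full range $d\leq n^{1/4}$: this requires comparing the number of $d$-regular bipartite graphs containing one versus two prescribed edges, and is exactly where the hypothesis on $d$ enters the argument; it will be carried out by invoking classical enumeration and switching estimates from the literature. The remaining ingredients---the trajectory lower bound via edge survival, and the martingale concentration along the jumps---are routine.
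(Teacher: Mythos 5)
Your overall strategy is the same as the paper's: track how many edges of the initial graph survive up to time $t$, use the fact that a given edge is hit by a proposed switch with probability $\Theta(1/(nd))$ per step to get a survival estimate of the form $(1-\Theta(1/(nd)))^t$, observe that under $\Psimple$ the expected count of those edges is much smaller, and separate the two distributions by Chebyshev-type concentration (the paper starts from a graph containing the diagonal perfect matching, tracks only the $n$ indicators $\xi_{i,t}$ of diagonal edges, and applies the distinguishing-statistic proposition of Levin--Peres). So the skeleton is sound, and your expectation computations on both sides are correct.

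The genuine gap is the fluctuation control along the dynamics. You invoke Azuma--Hoeffding ``to the embedded jump chain'' to claim $Z(X_t)$ has standard deviation $O(\sqrt{t})$, but $Z(X_t)$ is not a martingale, so Azuma does not apply to it directly; to use it you would need the Doob martingale $M_k=\Exp[Z(X_t)\mid \mathcal F_k]$ and a proof that its increments are $O(1)$, which amounts to a coupling/contraction estimate for two switch chains started from graphs differing by one switch --- this is not routine, and nothing in your sketch supplies it. The paper avoids this entirely by an elementary second-moment computation for the dynamics: it bounds the one-step conditional probabilities for the pair $(\xi_{i,t},\xi_{j,t})$, iterates to get $\Exp[\xi_{i,T}\xi_{j,T}]\leq (1-\tfrac{4}{nd})^T+\tfrac{c'd}{n}$, hence $\cov(\xi_{i,T},\xi_{j,T})\leq c'd/n$ and $\Var(D_T)\leq Cnd$, and then applies Chebyshev through \cite[Proposition~7.9]{book-peres}; you should replace the Azuma step by such a pairwise-correlation bound for your survival indicators. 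A secondary point: the stationary estimate $\Var_{\Psimple}Z\leq Cd^2$, which you single out as the main obstacle, is not actually needed at that precision. Since your dynamic mean at $t=c\,nd\log n$ is of order $d\,n^{1-Cc}$ while $d\leq n^{1/4}$, even a crude bound such as $\Var_{\Psimple}Z\leq \Exp_{\Psimple}[Z^2]\leq nd^3$ (or, as in the paper, restricting attention to $n$ specified edges, for which the stationary variance is trivially at most $nd$) already makes the stationary standard deviation negligible compared with the gap between the two means, so no enumeration or switching estimates are required.
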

\begin{proof}
We will assume in the proof that $n$ is sufficiently large. In order to derive a lower bound on the mixing time, we will make use of a distinguishing statistic. 
We start our chain with a graph $G_0$ which contains all edges of the form $(i,i)$, $1\leq i\leq n$.
Let $T$ be a positive integer parameter and let $G_1,\dots,G_T$ be the steps
of the switch chain starting at $G_0$. For each $0\leq t\leq T$, denote by $(\xi_{i,t})_{1\leq i\leq n}$ the Bernoulli variables indicating the ``diagonal'' edges in the graph $G_t$, i.e $\xi_{i,t}=1$ whenever the edge $(i,i)$ is present in $G_t$. 
Given $0\leq t\leq T-1$ and $1\leq i\leq n$, note that conditioned on a realization of $G_t$ with $\xi_{i,t}=1$, we have $\xi_{i,t+1}=1$ with conditional probability at least $1-\frac{2}{nd}$. Thus, we have 
$$
\Exp[ \xi_{i,t+1}]\geq (1-\frac{2}{nd}) \Exp[ \xi_{i,t}].
$$
Iterating this inequality, we get 
$$
\Exp[ \xi_{i,T}]\geq (1-\frac{2}{nd})^T,
$$
for every $i=1,\ldots,n$. 

Now given $1\leq i\neq j\leq n$, and  conditioned on $(\xi_{i,t}, \xi_{j,t})$, it is not difficult to chech that  $\xi_{i,t+1}\xi_{j,t+1}=1$ with conditional probability at most 
$$
\begin{cases}
\frac{c}{n^2} &\mbox{ if\,  $\xi_{i,t} \xi_{j,t}=0$}\\
&\\
1-\frac{4}{nd}+ \frac{c}{n^2}&\mbox{ if\,  $\xi_{i,t} \xi_{j,t}=1$},
\end{cases}
$$
for some universal constant $c$. 
Putting these estimates together, we get after iteration that 
$$
\Exp[\xi_{i,T}\xi_{j,T}]\leq (1-\frac{4}{nd})^T+ \frac{c'd}{n},
$$
for some universal constant $c'$. 

We deduce from the above that for every $1\leq i\neq j\leq n$ we have
$$
\cov{(\xi_{i,T},\xi_{j,T})}\leq \frac{c'd}{n}.
$$
Denoting $D_T= \sum_{i=1}^n \xi_{i,T}$, we deduce from the above that 
$$
\Exp[D_T] \geq n(1-\frac{2}{nd})^T \quad \text{and}\quad \Var(D_T)\leq Cnd,
$$
for some universal constant $C$. 

On the other hand, the $d$--regularity immediately implies that
the expected number of ``diagonal'' edges in a uniform random graph on $\BipGSet_n(d)$
is $d$ while the variance of that number is at most $nd$. 
It remains to apply \cite[Proposition~7.9]{book-peres} to finish the proof. 
\end{proof}

Following the approach from \cite{TY2020+}, we develop a comparison procedure between $(\BipGSet_n(d), \Psimple, Q_u)$ and the switch chain on multigraphs generated according to the configuration model,
which in turn can be compared to the random transposition chain on the set of permutations. 
We denote by $\ConfBipGSet_n(d)$ the set of all $d$--regular bipartite 
multigraphs on $[n^{(\ell)}]\sqcup[n^{(r)}]$ and equip it with 
the probability measure $\Pconfig$ induced by the configuration model. 
Recall that for any $G\in \ConfBipGSet_n(d)$ 
$$
\Pconfig (G)= \frac{(d!)^{2n}}{(nd)! \prod_{1\leq i,j\leq n} \mult_{G}(i,j)!},
$$
where $ \mult_{G}(i,j)$ denotes the multiplicity of the edge $(i,j)$ in $G$. 
When $n$ is large enough, we have the following estimate (see \cite[Theorem~6.2]{Janson})
\begin{equation}\label{eq: size-multigraph}
\frac12 e^{-\frac{(d-1)^2}{2}}\leq \Pconfig\big(\BipGSet_n(d)\big) \leq 2e^{-\frac{(d-1)^2}{2}}. 
\end{equation}
The generator $Q_c$ of the switch chain on $\ConfBipGSet_n(d)$ is defined 
for any $G_1, G_2\in \ConfBipGSet_n(d)$ by
$$
Q_c(G_1,G_2):=\begin{cases}
 \frac{\mult_{G_1}(i,j)\,\mult_{G_1}(i',j') }{nd(nd-1)/2},&\mbox{if $G_2\in \neigh(G_1)$ is obtained from $G_1$}\\
 &\mbox{ by the switching $\langle i,i',j,j'\rangle$};\\
-\sum_{G'\in \neigh(G_1)} Q_c(G_1,G'),&\mbox{if $G_1=G_2$};\\
&\\
0,&\mbox{otherwise.}
\end{cases}
$$
In the above notation, $\langle i,i',j,j'\rangle$ denotes the switching destroying the edges $(i,j)$ and $(i',j')$, and replacing them with $(i,j')$ and $(i',j)$.

It is known that the random transposition chain on the set of permutations of $[nd]$ satisfies the Modified log-Sobolev Inequality with constant $nd$ \cite{G}. 
Exploiting the intimate relation between the random transposition chain and the switch chain on the configuration model, it is easy to derive the 
following Modified log-Sobolev Inequality for the latter (see \cite[Proposition~2.2]{TY2020+} for details). 
\begin{prop}\label{eq: mlsi-conf}
For any $2\leq d\leq n/2$,  $(\ConfBipGSet_n(d), \Pconfig, Q_c)$ satisfies the Modified log-Sobolev Inequality with constant $c nd$ for some universal constant $c>0$.
\end{prop}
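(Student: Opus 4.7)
The plan is to realize $(\ConfBipGSet_n(d),\Pconfig,Q_c)$ as a quotient of the random transposition chain $(S_{nd},U,K_{RT})$, where $U$ denotes the uniform measure on the symmetric group $S_{nd}$, and to combine the classical bound of \cite{G} on the MLSI constant of the latter (of order $nd$) with the elementary fact that MLSI constants cannot increase under lumping of a reversible Markov chain. The starting point is to identify a ``configuration'' with a permutation $\sigma\in S_{nd}$ matching the $nd$ labelled left half-edges to the $nd$ labelled right half-edges, and to introduce the projection $\pi : S_{nd}\to \ConfBipGSet_n(d)$ that forgets the half-edge labels and records only the incident multigraph. A standard count gives $|\pi^{-1}(G)|=(d!)^{2n}/\prod_{i,j}\mult_G(i,j)!$, from which $\pi_*U=\Pconfig$ follows at once.

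The key step is to verify \emph{lumpability} of $\pi$ with respect to $K_{RT}$: for every $\sigma$ with $\pi(\sigma)=G$ and every target multigraph $G'$, the aggregated rate $\sum_{\sigma'\in\pi^{-1}(G')} K_{RT}(\sigma,\sigma')$ must depend on $\sigma$ only through $G$. This follows from the observation that a transposition of left half-edges $l_1,l_2$ in $\sigma$ modifies $G$ by the switching determined by the vertex quadruple $(v(l_1),v(l_2),v(\sigma(l_1)),v(\sigma(l_2)))$, and the number of pairs $(l_1,l_2)$ producing a prescribed switching $\langle i,i',j,j'\rangle$ is exactly $\mult_G(i,j)\mult_G(i',j')$. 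Dividing by $nd(nd-1)/2$ one recovers the rate $Q_c(G,G')$ from the definition of the generator, up to self-loops (which are irrelevant for the MLSI).

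With lumpability and reversibility in hand, any positive function $g$ on $\ConfBipGSet_n(d)$ lifts to $f:=g\circ\pi$ on $S_{nd}$ satisfying $\Ent_U(f)=\Ent_{\Pconfig}(g)$ and $\Dir^{K_{RT}}_U(f,\log f)=\Dir^{Q_c}_{\Pconfig}(g,\log g)$, the second identity being a direct consequence of lumpability and reversibility. Applying the MLSI on $(S_{nd},U,K_{RT})$ from \cite{G} to $f$ then yields the desired MLSI on $(\ConfBipGSet_n(d),\Pconfig,Q_c)$ with constant of order $nd$. The only delicate point is the multiplicity count underlying lumpability; the argument is routine but must be executed carefully because $\Pconfig$ is not uniform on $\ConfBipGSet_n(d)$ and the fibres of $\pi$ vary in size with $G$.
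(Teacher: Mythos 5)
Your argument is correct and follows essentially the same route as the paper, which derives the statement from the $O(nd)$ MLSI for random transpositions on $S_{nd}$ via the "intimate relation" between that chain and the switch chain on the configuration model (deferring details to \cite[Proposition~2.2]{TY2020+}): the half-edge matching map projects the random transposition chain onto $(\ConfBipGSet_n(d),\Pconfig,Q_c)$, and your multiplicity count $\mult_G(i,j)\mult_G(i',j')$ matching the definition of $Q_c$ is exactly the lumpability verification underlying that comparison. The lifting identities $\Ent_U(g\circ\pi)=\Ent_{\Pconfig}(g)$ and $\Dir_U(g\circ\pi,\log g\circ\pi)=\Dir_{\Pconfig}(g,\log g)$ then give the claim, so no gap remains.
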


\subsection{Simple paths and $s$-neighborhoods}\label{ajfnpakfjnwpfiqjwnfp}
For the remainder of the paper, fix $2\leq d\leq n/2$. 
We will use the last proposition to construct an auxiliary Markov chain on $\BipGSet_n(d)$ satisfying the Modified log-Sobolev Inequality with a constant of order $O(nd)$, and then use this auxiliary chain 
with the comparison Theorem~\ref{th: comparison-mlsi}. In order to verify that the auxiliary chain does satisfy the MLSI with a satisfactory constant, we will construct 
for any given positive function $f$ on $\BipGSet_n(d)$ an appropriate extension $\tilde f$ to the set $\ConfBipGSet_n(d)$.
Following \cite{TY2020+}, we interpret $\BipGSet_n(d)$ as a boundary for $\ConfBipGSet_n(d)\setminus \BipGSet_n(d)$ and define $\tilde f$ as 
a ``relative'' of the standard harmonic extension of $f$.
While the harmonic extension is constructed by launching a random walk from the given point in $\ConfBipGSet_n(d)\setminus \BipGSet_n(d)$
and averaging the values of $f$ where it hits the boundary, the strategy developed in \cite{TY2020+} is to construct specific ``direct'' paths to the boundary $\BipGSet_n(d)$
which would make the result of the averaging tractable.
As these paths crucially depend on some properties of the corresponding starting multigraph, 
let us start by partitioning $\ConfBipGSet_n(d)$.

\begin{defi}[A partition of $\ConfBipGSet_n(d)$, \cite{TY2020+}]\label{def: category}
Let $\cconst:=\lfloor \log\log n\rfloor$. We write 
$$
\ConfBipGSet_n(d)=  \bigsqcup_{k=0}^{\cconst} \categ(k)\sqcup \Ugly(\cconst),
$$
where $\Ugly(\cconst):=  \categ([0,\cconst])^c$, $\categ([0,\cconst]):= \bigsqcup_{k=0}^{\cconst} \categ(k)$, and $\categ(k)$ is defined as 
the set of multigraphs $G\in \ConfBipGSet_n(d)$ which satisfy all of the following: 
\begin{itemize}
\item $G$ has exactly $k$ multiedges of multiplicity $2$; 
\item None of those multiedges are incident to one another; 
\item $G$ has no multiedges of multiplicity three or greater. 
\end{itemize}
\end{defi}
Note that with this definition, we have $\BipGSet_n(d)=\categ(0)$. 
Multigraphs in $\categ([1,\cconst])$ have a simple structure allowing to build ``direct'' paths from them to the boundary. The paths are formed by the 
simple switchings which destroy the multiedges one at a time while not ''interfering`` with one and another.
Thus, for every $G'\in \categ(k)$, we construct a unique family of paths from $G'$ to $\BipGSet_n(d)$ of length $k$ where each step of the path destroys a multiple edge.
Such paths will be called ``simple paths'', and are formally defined as follows.
\begin{defi}[Simple paths, \cite{TY2020+}]
Given $1\leq k\leq \cconst$ and $G'\in \categ(k)$, denote by $\{(i_s,j_s)\}_{1\leq s\leq k}$ the multiedges of $G'$ of multiplicity $2$
arranged in increasing order of $(i_s)_{1\leq s\leq k}$. 
A {\it simple path} $P$ starting at $G'$ is a path of length $k$ where $P[t+1]$
is obtained from $P[t]$ via the simple switching $\langle i_{t+1},i_{t+1}',j_{t+1},j_{t+1}'\rangle$, such that $i_{t+1}'\in [n^{(\ell)}], j_{t+1}'\in [n^{(r)}]$ satisfy all of the following conditions:
\begin{itemize}
\item For every $0\leq t<k$, we have 
$$
i_{t+1}'\not\in \{i_s\}_{1\leq s\leq k}, \quad j_{t+1}'\not\in \{j_s\}_{1\leq s\leq k},
$$
and all $(i_s')_{1\leq s\leq k}$ (resp. $(j_s')_{1\leq s\leq k}$) are pairwise distinct. 
\item For every $0\leq t<k$, $\mult_{G'}(i_{t+1}',j_{t+1})= \mult_{G'}(i_{t+1},j_{t+1}')=0$. 
\end{itemize}
\end{defi}

It can be verified that (with our choice of $\cconst$) simple paths exist for every $G'\in \categ(k)$, and each simple path is uniquely determined by its starting point and endpoint (see \cite[Section~3]{TY2020+} for details). 
Note that the endpoint of a simple path belongs to $\BipGSet_n(d)$. 
\begin{defi}[$s$--neighborhood, \cite{TY2020+}]
The set of all endpoints of simple paths starting at $G'\in \ConfBipGSet_n(d)\setminus \BipGSet_n(d)$ will be denoted by $\SNeigh(G')$ and called {\it the $s$--neighborhood}
of the graph.
\end{defi}
Additionally, when $G'\in \BipGSet_n(d)$, we set $\SNeigh(G'):=\{G'\}$.
When $G'\in \categ(k)$, $1\leq k\leq \cconst$, the $s$--neighborhood of $G'$ satisfies (see \cite[Section~3]{TY2020+}):
\begin{equation}\label{eq: s-neighborhood}
\vert\SNeigh(G)\vert \in \big[\frac{(nd)^k}{2}, (nd)^k\big].
\end{equation}
We will also need to control the number of $s$--neighborhoods which contain a given simple graph: for any given $G\in \BipGSet_n(d)$ we have \cite[Section~3]{TY2020+}
\begin{equation}\label{eq: number-neighborhood}
\big|\big\{G'\in \categ(k):\; G\in \SNeigh(G')\big\}\big| \leq  \frac{(nd)^{k}}{k!}(d-1)^{2k},
\quad 1\leq k\leq \cconst.
\end{equation}
In the sequel, we will use the notation $\tuple=(G_1,G_1',G_2,G_2')$
for any $4$--tuple of graphs such that $G_1'\sim G_2'$ are in $\ConfBipGSet_n(d)$, $G_1\in \SNeigh(G_1')$, and 
$G_2\in \SNeigh(G_2')$.

\subsection{Matchings and connections}\label{sec: paths}

In order to make use of the comparison Theorem~\ref{th: comparison-mlsi}, we will need to construct a special family of paths between elements 
of $\BipGSet_n(d)$. As our auxiliary Markov chain on $\BipGSet_n(d)$
will be ``inherited'' from the switch chain on $\ConfBipGSet_n(d)$,
that family of paths will be determined by the structure of 
$\ConfBipGSet_n(d)$.
Again, we shall rely on the constructions from \cite{TY2020+}. 
It was observed in \cite{TY2020+} that for a large proportion of adjacent multigraphs in $\ConfBipGSet_n(d)$
there is a natural bijective mapping between their respective $s$--neighborhoods.
\begin{defi}[Perfect pairs, \cite{TY2020+}]
Given $1\leq k\leq \cconst$, a pair of adjacent graphs $(G_1,G_2)\in \categ(k)\times \categ(k)$ is referred to as a perfect pair if the switching $\langle i,i',j,j'\rangle$ used to obtain $G_2$ from $G_1$ satisfies the following conditions:
\begin{itemize}
\item Vertices $i,i',j,j'$ are not incident to any multiedges.
\item Vertices $i,i',j,j'$ are not adjacent to vertices incident to some multiedges.
\end{itemize}
Denote by $\Perfmatch(k)$ the set of perfect pairs 
from $\categ(k)\times \categ(k)$ and set $\Perfmatch:= \bigsqcup_{k=1}^{\cconst} \Perfmatch(k)$. 
\end{defi}
 
\begin{prop}[Matchings, {\cite[Section~5]{TY2020+}}]\label{prop: matching}
Let $G_1'\in \categ([1,\cconst])$. Then the following assertions hold.
\begin{itemize}
\item Let $G_2'\in \categ([1,\cconst])$ be such that $(G_1',G_2')\in \Perfmatch$. Then there is a bijective mapping $\psi_{G_1',G_2'}: \SNeigh(G_1') \to \SNeigh(G_2')$
such that $\psi_{G_2',G_1'}(\psi_{G_1',G_2'}(G))=G$ for all $G\in \SNeigh(G_1')$ and
$$\mbox{$G$ is adjacent to $\psi_{G_1',G_2'}(G)$ for all $G\in \SNeigh(G_1')$}.$$
\item Let $G_1\in \SNeigh(G_1')$ and let $G_2\in \BipGSet_n(d)$ be adjacent to $G_1$. 
Then there exists at most one multigraph $G_2'\in  \categ([1,\cconst])$ such that $(G_1',G_2')\in \Perfmatch$ and $\psi_{G_1',G_2'}(G_1)=G_2\in  \SNeigh(G_2')$.
\end{itemize}
\end{prop}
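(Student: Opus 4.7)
The plan is to leverage the ``separation'' built into the definition of a perfect pair: if $(G_1',G_2')\in\Perfmatch$ via the simple switching $\sigma=\langle a,a',b,b'\rangle$, then $\{a,a',b,b'\}$ is disjoint from every multiedge endpoint of $G_1'$ and even from every vertex adjacent (in $G_1'$) to such an endpoint. In particular, the vertex set of $\sigma$ is disjoint from the vertex set of any simple-path switching $\sigma_t=\langle i_t,i_t',j_t,j_t'\rangle$, since the latter involves the multiedge endpoints $i_t,j_t$ together with auxiliary vertices $i_t',j_t'$ which, by construction of a simple path, are themselves adjacent to multiedge endpoints. This disjointness is exactly what allows $\sigma$ to commute with every $\sigma_t$, and this commutation is the driving force of the whole argument.

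To define $\psi_{G_1',G_2'}$: for $G\in\SNeigh(G_1')$, let $G_1'=H_0\to H_1\to\cdots\to H_k=G$ be the unique simple path with successive switchings $\sigma_1,\ldots,\sigma_k$. I claim that $G_2'=\sigma(H_0)\to\sigma(H_1)\to\cdots\to\sigma(H_k)$ is a valid simple path starting at $G_2'$: each $\sigma_t$ remains a legal switching when applied to $\sigma(H_{t-1})$, because the edges it touches coincide with those in $H_{t-1}$ (by vertex-disjointness of $\sigma$ and $\sigma_t$), and the multiedge list of $G_2'$ in its canonical order agrees with that of $G_1'$. I then set $\psi_{G_1',G_2'}(G):=\sigma(G)$. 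Since $\sigma$ is an involution and the perfect-pair property is symmetric in $(G_1',G_2')$, the analogous construction starting from $G_2'$ yields $\psi_{G_2',G_1'}\circ\psi_{G_1',G_2'}=\mathrm{id}$, so $\psi_{G_1',G_2'}$ is a bijection. Adjacency of $G$ and $\psi_{G_1',G_2'}(G)$ is immediate since they differ by the single switching $\sigma$.

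For the second assertion, fix $G_1\in\SNeigh(G_1')$ and $G_2\in\BipGSet_n(d)$ adjacent to $G_1$, and suppose some $G_2'\in\categ([1,\cconst])$ satisfies $(G_1',G_2')\in\Perfmatch$ and $\psi_{G_1',G_2'}(G_1)=G_2$. By the construction above, $G_2=\sigma(G_1)$ where $\sigma$ is the switching defining the perfect pair. Hence $\sigma$ must coincide with the (unique) simple switching taking $G_1$ to $G_2$, and consequently $G_2'=\sigma(G_1')$ is uniquely determined by $G_1'$ and the data $(G_1,G_2)$.

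The main obstacle is the detailed verification, in the bijection step, that each constraint defining a simple-path switching is preserved when one transports the construction from $H_{t-1}$ to $\sigma(H_{t-1})$: existence of the two edges being destroyed, non-existence of the two edges being created, and distinctness of auxiliary vertices across the path. The perfect-pair hypothesis is engineered precisely for this: any potential failure would force a vertex of $\sigma$ either to coincide with some multiedge endpoint $i_s$ or $j_s$, or to be adjacent to some $i_s'$ or $j_s'$, which are themselves adjacent to multiedge endpoints---both situations ruled out by the definition of $\Perfmatch$.
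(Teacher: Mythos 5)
Your key claim --- that the perfect-pair switching $\sigma=\langle a,a',b,b'\rangle$ is vertex-disjoint from every switching $\sigma_t=\langle i_t,i_t',j_t,j_t'\rangle$ of a simple path --- is not justified, and in fact is false. The perfect-pair conditions constrain $a,a',b,b'$ only relative to the \emph{multiedges} of $G_1'$ (not incident to them, not adjacent to their endpoints). The auxiliary vertices $i_t',j_t'$ of a simple path, however, are \emph{not} required to be adjacent to multiedge endpoints in $G_1'$: the only constraints are $i_t'\notin\{i_s\}$, $j_t'\notin\{j_s\}$, pairwise distinctness, and $\mult_{G_1'}(i_t',j_t)=\mult_{G_1'}(i_t,j_t')=0$ (they become adjacent to $j_t$, resp.\ $i_t$, only \emph{after} the switching). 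Consequently the auxiliary edge $(i_t',j_t')$ may be any edge of $G_1'$ away from the multiedges --- in particular it may be exactly $(a,b)$ or $(a',b')$, and all the simple-path constraints are then satisfied precisely \emph{because} of the perfect-pair conditions ($a$ is not adjacent to $j_t$, $b$ is not adjacent to $i_t$). For an endpoint $G\in\SNeigh(G_1')$ reached by such a path, the edge $(a,b)$ has been destroyed, so $\sigma(G)$ is not even well defined, and your map $\psi_{G_1',G_2'}(G):=\sigma(G)$ does not exist on all of $\SNeigh(G_1')$; symmetrically, elements of $\SNeigh(G_2')$ whose simple paths use the edges $(a,b')$ or $(a',b)$ of $G_2'$ are not reached. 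This is exactly the exceptional regime your ``main obstacle'' paragraph waves at, but the perfect-pair hypothesis does not rule it out --- it rules out interactions with the multiedges, not with arbitrary auxiliary edges.

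A correct construction must match these exceptional endpoints explicitly, e.g.\ sending the path from $G_1'$ that uses auxiliary edge $(a,b)$ to the path from $G_2'$ that uses $(a,b')$ (or a similar convention); one can check the two endpoints are then adjacent, but via a switching \emph{different} from $\sigma$. This also breaks your proof of the second bullet: since $\psi_{G_1',G_2'}(G_1)$ need not equal $\sigma(G_1)$, you cannot recover $\sigma$ (hence $G_2'$) as ``the unique switching taking $G_1$ to $G_2$,'' and uniqueness requires a separate analysis of which perfect pairs can produce a given adjacent pair $(G_1,G_2)$. Note also that the paper itself does not prove this proposition; it imports it from \cite[Section~5]{TY2020+}, where the matching is built with precisely this kind of case analysis. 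As written, your argument covers only the ``generic'' endpoints whose simple paths avoid $\{a,a',b,b'\}$, so there is a genuine gap.
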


The above proposition will play a crucial role in our comparison procedure as it associates a family of paths of minimal length $1$ for most pairs of adjacent multigraphs. When $\tuple=(G_1,G_1',G_2,G_2')$ is such that $(G_1',G_2')\in \Perfmatch$, $G_1\in \SNeigh(G_1')$, and $G_2=\psi_{G_1',G_2'}(G_1)$, we let $\path_\tau:= (G_1,\psi_{G_1',G_2'}(G_1)=G_2)$ to be the path of length one from $G_1$ to $G_2$. 

For the pairs of multigraphs which are not perfect, a different construction is required. 
Let us define 
\begin{align*}
\imperfect(\cconst):=\big\{ (G_1', G_2'):\,   &G_1'\sim G_2',\, G_1', G_2'\in \categ([0,\cconst]),\\
&(G_1', G_2')\notin \BipGSet_n(d)\times \BipGSet_n(d)\big\} \setminus \big(\bigsqcup_{k=1}^\cconst \Perfmatch(k)\big),
\end{align*}
and consider 
$$
\tupleset:=\big\{ (G_1,G_1',G_2,G_2'):\; (G_1',G_2')\in \imperfect(\cconst),\; G_1\in\SNeigh(G_1'),\;G_2\in\SNeigh(G_2') 
\big\}.
$$
We recall that in a tuple $\tuple=(G_1,G_1',G_2,G_2')$, we may possibly have that $G_1=G_1'$ or $G_2=G_2'$.
One of the technical contributions in \cite{TY2020+} consists in constructing for every given tuple $\tuple=(G_1,G_1',G_2,G_2')\in \tupleset$ a path $\path_\tuple$ in $\BipGSet_n(d)$ 
starting at $G_1$ and ending at $G_2$ (called a {\it connection}) having a set of special properties
useful in the context of functional inequalities on $\BipGSet_n(d)$. 
The definition of a connection is very technical (see \cite[Definition~6.11]{TY2020+})
and we prefer not to include it in this paper. Rather, we provide a proposition which establishes
existence of certain paths satisfying properties crucial to us (we refer the reader to \cite[Sections~6-7]{TY2020+} for a comprehensive treatment):

\begin{prop}[Connections, \cite{TY2020+}]\label{prop: connections}
Assuming $n$ is sufficiently large, there exists a collection of paths in $\BipGSet_n(d)$, $(\path_\tuple)_{\tuple \in \tupleset}$, indexed by $\tupleset$ and satisfying the following conditions.
\begin{itemize}
\item[i.] For every $0\leq k_1,k_2\leq \cconst$ and every $\tuple=(G_1,G_1',G_2,G_2') \in \tupleset$ with $G_1'\in \categ(k_1), G_2'\in \categ(k_2)$, the path $\path_\tuple$ starts at $G_1$, ends at $G_2$, and is of length at most $C (k_1+k_2)$ for some universal constant $C>0$. 
\item[ii.] Given any adjacent graphs $H\sim H'$ in $\BipGSet_n(d)$, we have 
$$
\sum_{\underset{(H,H')\in \path_\tuple}{\tuple =(G_1,G_1',G_2,G_2') \in \tupleset}} \frac{\Pconfig(G_1') Q_c(G_1',G_2')}{\vert \SNeigh(G_1')\vert\, \vert \SNeigh(G_2')\vert} \length{\path_\tuple}^2\leq C\,\frac{\Psimple(H) Q_u(H,H')}{\sqrt{n}},
$$
for some universal constant $C>0$. 
\end{itemize}
\end{prop}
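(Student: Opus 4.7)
The plan is to construct each connection $\path_\tuple$ by combining a reversal of the simple path from $G_1'$ to $G_1$ with the switching $G_1'\to G_2'$ and the simple path from $G_2'$ to $G_2$, while rerouting all intermediate states to lie in $\BipGSet_n(d)$. The natural composition traverses the multigraphs $G_1'$ and $G_2'$, which is not allowed. However, since each elementary switching in a simple path acts only on vertices that are vertex-disjoint from the multiedges of the current state, almost every pair of the $k_1+1+k_2$ elementary moves commutes. Thus one can reorder these moves and replace the central switching, together with any locally conflicting moves, by a short, carefully designed sub-sequence of $O(1)$ switchings internal to $\BipGSet_n(d)$. For perfect pairs this would just be the bijection $\psi_{G_1',G_2'}$ of Proposition~\ref{prop: matching}, which already gives a path of length one; for imperfect tuples, a length bound of $C(k_1+k_2)$ is achieved by exhausting the finite list of conflict patterns, establishing~(i).

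For property~(ii), I would fix an edge $(H,H')$ in $\BipGSet_n(d)$ and carry out a double-counting argument, stratifying the contributing tuples by $(k_1,k_2)$ with $G_1'\in\categ(k_1)$, $G_2'\in\categ(k_2)$, and by the combinatorial type of the conflict between the central switching and the multiedges of $G_1',G_2'$. The key inputs are the mass estimate \eqref{eq: size-multigraph}, the size bound \eqref{eq: s-neighborhood}, the multiplicity estimate \eqref{eq: number-neighborhood}, and the explicit configuration-model density, which together let one compare $\Pconfig(G_1')/|\SNeigh(G_1')|$ with $\Psimple(G_1)$ up to factors depending only on $k_1$ and $d$. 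Crucially, imperfectness forces the four vertices of the switching $G_1'\to G_2'$ to lie at bounded distance from some multiedge of $G_1'$ or $G_2'$, which restricts the number of such adjacent pairs from a given $G_1'$ by a factor of order $(k_1+k_2)d^3/(nd)^2$ compared with a generic switching. Combining this restriction with the weight $\length{\path_\tuple}^2=O((k_1+k_2)^2)$ and summing over $1\leq k_1,k_2\leq \cconst=\lfloor\log\log n\rfloor$ leaves a net saving of at least $1/\sqrt{n}$ once $n$ is large enough.

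The principal obstacle will be executing the construction uniformly across all imperfectness patterns while simultaneously controlling length and congestion. When the central switching shares vertices with several multiedges, the natural bridge in $\BipGSet_n(d)$ can proliferate, and the congestion through simple edges $(H,H')$ located near those multiedges can spike. Keeping each repair to $O(1)$ steps and ensuring that each simple edge is traversed by only a controlled number of tuples per category requires an exhaustive case analysis, which is the combinatorial core of Sections~6--7 of \cite{TY2020+}. I would adopt that construction verbatim, with a mild adaptation of the congestion computation to accommodate the $\length{\path_\tuple}^2$ weighting demanded by Theorem~\ref{th: comparison-mlsi}, in place of the linear $\length{\path_\tuple}$ weighting that is sufficient for log-Sobolev comparisons.
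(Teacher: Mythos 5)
Your proposal follows essentially the same route as the paper: the paths are taken to be the connections of \cite{TY2020+} (Definition~6.11 there), the length bound in (i) comes from Remark~6.13, and the congestion estimate (ii) is reduced, via \eqref{eq: size-multigraph}, \eqref{eq: s-neighborhood} and \eqref{eq: number-neighborhood}, to the counting results of \cite[Propositions~6.24--6.26]{TY2020+}, exactly as in the paper's argument. The only cosmetic difference is that no adaptation of those congestion computations to the $\length{\path_\tuple}^2$ weight is actually needed: since $\length{\path_\tuple}\leq C(k_1+k_2)\leq 2C\cconst$, one simply pulls out a factor $O(\cconst^2)$, and this together with the $(C_d\cconst)^{C'\cconst}=n^{o(1)}$ loss from the cited propositions (which deliver a saving of order $1/n$, not merely the $1/\sqrt{n}$ you heuristically attribute to the rarity of imperfect switchings) is absorbed by the $1/\sqrt{n}$ slack in the statement.
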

\begin{proof}
We define $(\path_\tuple)_{\tuple \in \tupleset}$ as the set of {\it connections}
\cite[Definition~6.11]{TY2020+}. 
The first assertion of the proposition can be deduced from 
\cite[Remark~6.13]{TY2020+}. 
For the second assertion, denote 
$$
\gamma:= \sum_{\underset{(H,H')\in \path_\tuple}{\tuple =(G_1,G_1',G_2,G_2') \in \tupleset}} \frac{\Pconfig(G_1') Q_c(G_1',G_2')}{\vert \SNeigh(G_1')\vert\, \vert \SNeigh(G_2')\vert} \length{\path_\tuple}^2.
$$
In view of the first part of the proposition, the definitions of $(\Pconfig,Q_c)$ and $(\Psimple,Q_u)$, and using \eqref{eq: s-neighborhood}, we get 
\begin{align*}
\gamma&\leq C_1\cconst^2
\sum_{0\leq k_1,k_2\leq \cconst}\sum_{\underset{\underset{(H,H')\in \path_\tuple}{(G_1',G_2')\in \categ(k_1)\times \categ(k_2)}}{\tuple =(G_1,G_1',G_2,G_2') \in \tupleset}}\frac{\Pconfig(G_1') Q_c(G_1',G_2')}{(nd)^{k_1+k_2}}\\
&\leq C_2 \cconst^2\Pconfig\big(\BipGSet_n(d)\big)\Psimple(H) Q_u(H,H')  \sum_{0\leq k_1,k_2\leq \cconst}\sum_{\underset{\underset{(H,H')\in \path_\tuple}{(G_1',G_2')\in \categ(k_1)\times \categ(k_2)}}{\tuple =(G_1,G_1',G_2,G_2') \in \tupleset}} \frac{1}{2^{k_1}(nd)^{k_1+k_2}},
\end{align*}
where $C_1,C_2>0$ are universal constants.
By combining \cite[Proposition~6.24]{TY2020+}, \cite[Proposition~6.25]{TY2020+} (while bounding the parameter $r$ there by $2\cconst$) and \cite[Proposition~6.26]{TY2020+}, we get
\begin{align*}
\gamma
\leq \frac{(C_d \cconst)^{C'\cconst}}{n} \Pconfig\big(\BipGSet_n(d)\big)\Psimple(H) Q_u(H,H'),
\end{align*}
where the constant $C_d$ depends only on $d$ and $C'>0$ is a universal constant. 
By the choice of $\cconst$, we get the result provided $n$ is large enough. 
\end{proof}

\subsection{The function extension and the auxiliary chain}\label{aljkdnfpajfnpweifjnwi}
Now, we define the aforementioned auxiliary chain to compare it with the switch chain on the configuration model. 
Now, 
given
$G_1'\sim G_2'$ in $\categ([0,\cconst])$ and $G_1\in  \SNeigh(G_1')$, $G_2\in  \SNeigh(G_2')$, set 
$$
\beta_{G_1',G_2'}(G_1,G_2):=\begin{cases}
\frac{\mathbf{1}_{\{G_ 2=\psi_{G_1',G_2'}(G_1)\}}}{\vert \SNeigh(G_1')\vert}&\mbox{if $G_1',G_2'\in\categ([1,\cconst])$ and $(G_1', G_2')\in \Perfmatch$,}\\
\frac{1}{\vert \SNeigh(G_1')\vert\, \vert \SNeigh(G_2')\vert}&
\mbox{otherwise}.
\end{cases}
$$
Note that for any pair $G_1'\sim G_2'$ in $\categ([0,\cconst])$,
$$
\sum_{\substack{G_1\in \SNeigh(G_1'),
\\G_2\in \SNeigh(G_2')}}
\beta_{G_1',G_2'}(G_1,G_2) =1,
$$
and that for any $4$--tuple $(G_1,G_1',G_2,G_2')$, $\beta_{G_1',G_2'}(G_1,G_2)=\beta_{G_2',G_1'}(G_2,G_1)$.
We define a Markov generator $\tilde Q_u$ on $(\BipGSet_n(d), \Psimple)$ by setting
for every $G_1\neq G_2$ in $\BipGSet_n(d)$,
\begin{equation}\label{eq: def-aux-chain}
\tilde Q_u(G_1,G_2)
:= \frac{1}{4\Psimple(G_1)} \sum_{
\substack{G_1',G_2'\in \categ([0,\cconst]):\\G_1'\sim G_2',\\G_1\in \SNeigh(G_1'),
G_2\in \SNeigh(G_2')}
} 
\Pconfig(G_1')Q_c(G_1',G_2')\beta_{G_1',G_2'}(G_1,G_2),
\end{equation}
and taking $\tilde Q_u(G_1,G_1):=-\sum_{G_2:\,G_2\neq G_1} \tilde Q_u(G_1,G_2)$. 
Note that $\tilde Q_u(G_1,G_2)=\tilde Q_u(G_2,G_1)$ for all $G_1,G_2$.
Further, for every $G_1\in \BipGSet_n(d)$, we get in view of
\eqref{eq: s-neighborhood}, \eqref{eq: number-neighborhood}, and \eqref{eq: size-multigraph}
\begin{align*}
&\sum\limits_{G_2:\,G_2\neq G_1}\tilde Q_u(G_1,G_2)\\
&\hspace{1cm}\leq \frac{1}{4\Psimple(G_1)} \sum_{\substack{G_1',G_2'\in \categ([0,\cconst]):\\G_1'\sim G_2',\\G_1\in \SNeigh(G_1')}}\Pconfig(G_1')Q_c(G_1',G_2')
\sum_{\substack{G_2\in \SNeigh(G_2')}}
\beta_{G_1',G_2'}(G_1,G_2)\\
&\hspace{1cm}\leq \frac{1}{4\Psimple(G_1)}
\sum_{\substack{G_1'\in \categ([0,\cconst]):\\G_1\in \SNeigh(G_1')}}\frac{\Pconfig(G_1')}{
|\SNeigh(G_1')|}\leq \sum_{k=0}^{\cconst}
\frac{(d-1)^{2k}}{k!}\frac{e^{-\frac{(d-1)^2}{2}}}{2^k}
\leq 1.
\end{align*}
Thus, the generator $\tilde Q_u$ is 
well defined and is reversible with respect to $\Psimple$. 
Next, we prove that the above auxiliary chain satisfies the Modified log-Sobolev Inequality with constant
of order $O_d(n)$.
For the rest of the subsection, we denote by $\Phi:\, \R_+^2\to\R$ the function defined by $\Phi(x,y)=(x-y)\log\frac{x}{y}$ (note that the function is convex in two variables). 
We first need the following lemma.
\begin{lemma}\label{nfapfienfpiwjfnpqifejnpqifjn}
Let $(\Omega,Q,\pi)$ be a reversible Markov chain, and let $f$ be a positive function
on $\Omega$, with $\Exp_\pi\, f=1$ and $f(\omega)\geq \delta$ for all $\omega\in\Omega$
and some parameter $\delta\in(0,1/2]$. Then
$$
\sum_{\omega\in\Omega}\pi(\omega)\,(f(\omega)-1)\log f(\omega)
\leq C'\,|\log \delta|\,\Ent_\pi\, f
$$
for a universal constant $C'>0$.
\end{lemma}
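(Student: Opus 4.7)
The plan is to reduce the lemma to a pointwise estimate on the real line, which then lifts to the $\pi$-expectation term-by-term. Set $\psi(x) := (x-1)\log x$ and $\phi(x) := x\log x - x + 1$; both are non-negative on $(0,\infty)$ and vanish at $x=1$. Because $\Exp_\pi f = 1$, we can rewrite both quantities of interest as sums of these ``nice'' functions:
$$
\sum_{\omega\in\Omega}\pi(\omega)(f(\omega)-1)\log f(\omega) \;=\; \sum_{\omega\in\Omega}\pi(\omega)\,\psi(f(\omega)),
$$
$$
\Ent_\pi f \;=\; \sum_{\omega\in\Omega}\pi(\omega)\bigl(f(\omega)\log f(\omega) - f(\omega) + 1\bigr) \;=\; \sum_{\omega\in\Omega}\pi(\omega)\,\phi(f(\omega)).
$$
Thus it suffices to establish the pointwise inequality $\psi(x)\leq C\,|\log\delta|\,\phi(x)$ for every $x\geq \delta$, for some universal constant $C>0$; summing against $\pi$ will then yield the lemma.

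To verify the pointwise bound I split into two regimes. First, for $x\geq 1/2$ the ratio $h(x):=\psi(x)/\phi(x)$ extends continuously through $x=1$: a second-order Taylor expansion yields $\psi(x)=(x-1)^2+O((x-1)^3)$ and $\phi(x)=(x-1)^2/2+O((x-1)^3)$, so the natural extension gives $h(1)=2$. As $x\to\infty$, both $\psi(x)$ and $\phi(x)$ are asymptotic to $x\log x$, so $h(x)\to 1$. Hence $h$ is continuous on $[1/2,\infty)$ with a finite limit at infinity, and is therefore bounded there by some universal constant $C_0$. Second, for $\delta \leq x < 1/2$ crude estimates suffice: $\psi(x)=(1-x)\log(1/x)\leq |\log\delta|$, and $\phi'(x)=\log x$ shows that $\phi$ is decreasing on $(0,1)$, so $\phi(x)\geq \phi(1/2)=(1-\log 2)/2 > 0$. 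Together these give $h(x)\leq 2|\log\delta|/(1-\log 2)$ throughout $[\delta,1/2)$.

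Since $|\log\delta|\geq \log 2$ by the assumption $\delta\leq 1/2$, the two regimes combine to yield $h(x)\leq C\,|\log\delta|$ for all $x\geq \delta$, with $C$ universal, which is exactly the required pointwise inequality. I do not anticipate a substantial obstacle: the only delicate point is the removable singularity of $h$ at $x=1$, which the Taylor expansion disposes of, after which everything reduces to elementary real analysis.
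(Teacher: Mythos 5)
Your proof is correct and follows essentially the same route as the paper: both reduce the claim to a pointwise comparison of $(x-1)\log x$ against $1-x+x\log x$ for $x\geq\delta$, with the factor $|\log\delta|$ entering only through the regime $x<1/2$ where $1-x+x\log x$ is bounded below by a constant. The only difference is cosmetic: on $[1/2,\infty)$ you bound the ratio by a soft continuity/compactness argument (removable singularity at $x=1$, limit $1$ at infinity), whereas the paper exhibits explicit constants via the elementary bounds $1-x+x\log x\geq\frac16(x-1)^2$ on $[1/2,10]$ and $\geq\frac12 x\log x$ for $x>10$.
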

\begin{proof}
We write
$$
\Ent_\pi \,f=\sum_{\omega\in\Omega}\big(1-f(\omega)+f(\omega)\log f(\omega)\big)\pi(\omega),
$$
where $1-f(\omega)+f(\omega)\log f(\omega)\geq 0$ for all $\omega$,
and compare the terms $1-f(\omega)+f(\omega)\log f(\omega)$ and $(f(\omega)-1)\log f(\omega)$.
We consider several cases.
\begin{itemize}
\item $f(\omega)\in[1/2,10]$. We have
$1-f(\omega)+f(\omega)\log f(\omega)\geq \frac16 (f(\omega)-1)^2$
while at the same time
 $(f(\omega)-1)\log f(\omega)
\leq 2(f(\omega)-1)^2$. Thus, in this regime we have
$$
1-f(\omega)+f(\omega)\log f(\omega)\geq \frac{1}{12}\,(f(\omega)-1)\log f(\omega).
$$
\item $f(\omega)>10$.
Then
\begin{align*}
&1-f(\omega)+f(\omega)\log f(\omega)\geq \frac12 f(\omega)\log f(\omega),\\
&(f(\omega)-1)\log f(\omega)\leq f(\omega)\log f(\omega),
\end{align*}
implying that 
$$
1-f(\omega)+f(\omega)\log f(\omega)\geq \frac12(f(\omega)-1)\log f(\omega).
$$
\item $f(\omega)<1/2$. In this range we have
$
(f(\omega)-1)\log f(\omega)\leq -\log f(\omega)
$
whereas
$
1-f(\omega)+f(\omega)\log f(\omega)\geq \frac18$. In view of the assumptions on $f$, this implies
$$
1-f(\omega)+f(\omega)\log f(\omega)\geq \frac{1}{8|\log \delta|}(f(\omega)-1)\log f(\omega).
$$
\end{itemize}
Combining the above estimates, we get the result.
\end{proof}

\begin{prop}[The MLSI for the auxiliary chain]\label{prop: auxiliary chain}
For every fixed $d\geq 2$ there are $n_d,C_d>0$ depending only on $d$ such that, assuming $n\geq n_d$, $(\BipGSet_n(d), \Psimple, \tilde Q_u)$ satisfies the Modified log-Sobolev Inequality with constant $C_d n$. 
\end{prop}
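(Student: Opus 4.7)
The plan is to transfer the MLSI from the switch chain on the configuration model (Proposition~\ref{eq: mlsi-conf}) to the auxiliary chain $\tilde Q_u$ via a suitable extension of an arbitrary positive function $f$ on $\BipGSet_n(d)$. First, Lemma~\ref{akjfnqoifnfoqifalkdjfn} lets me assume $\Exp_{\Psimple} f = 1$ and $f \geq c$ for a small universal constant $c$. Next, define $\tilde f:\ConfBipGSet_n(d)\to(0,\infty)$ by $\tilde f(G') = f(G')$ for $G' \in \BipGSet_n(d)$, $\tilde f(G') = |\SNeigh(G')|^{-1} \sum_{H \in \SNeigh(G')} f(H)$ for $G' \in \categ([1,\cconst])$, and $\tilde f(G') = 1$ for $G' \in \Ugly(\cconst)$, so that $\tilde f \geq c$ everywhere. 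The main input, applied to this extension, will be $\Ent_{\Pconfig}(\tilde f) \leq C_1 nd\, \Dir_{\Pconfig}(\tilde f,\log\tilde f)$.

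The entropy comparison is the easy half: using the variational formula \eqref{eq: caract-entropy} with $t=1$, every summand is non-negative, so restricting the sum to $G \in \BipGSet_n(d)$ and using that $\Pconfig|_{\BipGSet_n(d)} = \Pconfig(\BipGSet_n(d))\,\Psimple$ (since the configuration model is uniform over simple graphs) together with \eqref{eq: size-multigraph} yields $\Ent_{\Pconfig}(\tilde f) \geq c_d\, \Ent_{\Psimple}(f)$.

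The Dirichlet form comparison is where most of the work lies. Split $\Dir_{\Pconfig}(\tilde f,\log\tilde f) = D_{\rm good} + D_{\rm bad}$ according to whether both or only one of the endpoints $G_1', G_2'$ lie in $\categ([0,\cconst])$. For $D_{\rm good}$, I will exploit joint convexity of $\Phi(x,y) = (x-y)\log(x/y)$: viewing $\beta_{G_1',G_2'}$ as a probability measure on $\SNeigh(G_1') \times \SNeigh(G_2')$ whose marginals $|\SNeigh(G_i')|^{-1}$ reproduce $\tilde f(G_i')$, convexity gives $\Phi(\tilde f(G_1'),\tilde f(G_2')) \leq \sum_{G_1,G_2} \beta_{G_1',G_2'}(G_1,G_2)\,\Phi(f(G_1),f(G_2))$; swapping the order of summation and recognising the definition \eqref{eq: def-aux-chain} of $\tilde Q_u$ yields $D_{\rm good} \leq 4\,\tilde\Dir_{\Psimple}(f,\log f)$. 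For $D_{\rm bad}$, since $\tilde f \equiv 1$ on $\Ugly(\cconst)$, pairs within $\Ugly(\cconst)$ contribute zero, and mixed pairs contribute $\Phi(\tilde f(G'),1) = (\tilde f(G')-1)\log\tilde f(G')$, so that $D_{\rm bad} \leq \sum_{G' \in \categ([0,\cconst])} \Pconfig(G')\,q(G',\Ugly(\cconst))(\tilde f(G')-1)\log\tilde f(G')$, where $q(G',\Ugly(\cconst))$ is the total $Q_c$-rate of jumping from $G'$ into $\Ugly(\cconst)$.

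The main obstacle is to show that this bad contribution is negligible after multiplication by the $nd$ factor from the MLSI on the configuration model, i.e.\ that $D_{\rm bad} \leq \epsilon_d(n)\, \Ent_{\Pconfig}(\tilde f)$ with $\epsilon_d(n) = o_d(1/n)$. This relies on two combinatorial observations: for $G' \in \categ(k)$ with $k \leq \cconst - 2$, entering $\Ugly(\cconst)$ necessarily creates a triple edge, for which only $O_d(k)$ of the $\Theta((nd)^2)$ switchings qualify, giving $q(G',\Ugly(\cconst)) = O_d(k/n^2)$; while for $k \in \{\cconst - 1, \cconst\}$ the rate may grow to $O_d(1/n)$ but $\Pconfig(\categ(k))$ decays super-polynomially at $k=\cconst=\lfloor\log\log n\rfloor$ by a Poisson-type tail estimate, yielding a factor $(\log n)^{-c_d}$. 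Combined with the convexity bound $\tilde f\log \tilde f \leq |\SNeigh(G')|^{-1}\sum_{H \in \SNeigh(G')} f(H)\log f(H)$ and Lemma~\ref{nfapfienfpiwjfnpqifejnpqifjn} (applied after normalising $\tilde f$ so that $\Exp_{\Pconfig}\tilde f = 1$, which changes both $\Ent$ and $\Dir$ only by an $O_d(1)$ factor), this converts the weighted sum in $D_{\rm bad}$ into an $\Ent_{\Pconfig}(\tilde f)$-bound with a prefactor $\epsilon_d(n) \to 0$ sufficiently fast. Once the bad term is absorbed for $n \geq n_d$, combining with the entropy comparison delivers $\Ent_{\Psimple}(f) \leq C_d n\,\tilde\Dir_{\Psimple}(f,\log f)$, as required.
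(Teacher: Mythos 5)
Your proposal follows essentially the same route as the paper's proof: the reduction via Lemma~\ref{akjfnqoifnfoqifalkdjfn}, the same averaged extension $\tilde f$ (equal to $1$ off $\categ([0,\cconst])$), the term-wise entropy comparison through \eqref{eq: size-multigraph}, the convexity-of-$\Phi$ argument that reconstitutes $\tilde Q_u$ on pairs inside $\categ([0,\cconst])$, and, for the boundary pairs, the combination of small jump rates with the Poisson-type decay of the high categories and Lemma~\ref{nfapfienfpiwjfnpqifejnpqifjn}, followed by absorption of the entropy error term. Two harmless imprecisions, easily repaired: the entropy comparison requires keeping $t$ free in \eqref{eq: caract-entropy} and taking the infimum after the term-wise bound (fixing $t=1$ is not enough since $\Exp_{\Pconfig}\tilde f\neq 1$ in general), and for $G'\in\categ(k)$ with $k\leq\cconst-2$ a switching can also land in $\Ugly(\cconst)$ by creating a double edge incident to an existing multiedge, not only a triple edge, although your stated rate bound $O_d(k/n^2)$ still covers those transitions.
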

\begin{proof}
We will deduce the result by an appropriate comparison with the switch chain on the configuration model. 
We shall verify that
for any positive function $f:\, \BipGSet_n(d)\to \R_+$,
$$
\Ent_{\Psimple}(f)\leq  c_dn\,\Dir_{\Psimple}(f,\log f),
$$
for some appropriate constant $c_d$. 
Note that in view of Lemma~\ref{akjfnqoifnfoqifalkdjfn}, we can assume without loss of generality that
$\Exp_{\Psimple}(f)=1$ and
$f(G)\geq c$
for all $G\in \BipGSet_n(d)$, for some universal constant $c>0$.
Using the characterization of entropy in \eqref{eq: caract-entropy}, we get that for any extension $\tilde f$ of $f$ to $\ConfBipGSet_n(d)$,
$$
\Ent_{\Psimple}(f)\leq \max_{G\in\BipGSet_n(d)} \frac{\Psimple(G)}{\Pconfig(G)}  \Ent_{\Pconfig}(\tilde f)\leq C_d \Ent_{\Pconfig}(\tilde f),
$$
where the last inequality follows from \eqref{eq: size-multigraph}, and where $C_d>0$ is a constant depending only on $d$. 
Using Proposition~\ref{eq: mlsi-conf}, we deduce that 
$$
\Ent_{\Psimple}(f)\leq  C_d' n\,  \Dir_{\Pconfig}(\tilde f, \log \tilde f),
$$
for any extension $\tilde f$. Here $C_d'>0$ is a constant depending only on $d$. 
We now choose a specific extension $\tilde f:\, \ConfBipGSet_n(d)\to \R_+$ for which $ \Dir_{\Pconfig}(\tilde f, \log \tilde f)$ can be compared to
$\Dir_{\Psimple}(f,\log f)$ and $\Ent_{\Psimple}\,f$. 
Recalling the notation $ \SNeigh(\cdot)$ from  Subsection~\ref{ajfnpakfjnwpfiqjwnfp},
we define 
$$
\tilde f(G'):=\begin{cases}
\frac{1}{\vert \SNeigh(G')\vert} \sum_{G\in \SNeigh(G')} f(G),&\mbox{if $G'\in \categ([0,\cconst])$}\\
\Exp_{\Psimple} f=1,&\mbox{otherwise}.\\
\end{cases}
$$
Using reversibility and the symmetry of $\Phi$, we can write 
\begin{align*}
\Dir_{\Pconfig}(\tilde f, \log \tilde f)
&=\frac12 \sum_{\underset{G_1'\sim G_2'}{G_1',G_2'\in \categ([0,\cconst])}} \Pconfig(G_1')Q_c(G_1',G_2') \Phi\big(\tilde f(G_1'), \tilde f(G_2')\big)\\
&+ \sum_{\underset{G_1'\sim G_2'}{G_1'\in \categ([0,\cconst]),\,G_2'\in
\categ([\cconst+1,\cconst+2])}} \Pconfig(G_1')Q_c(G_1',G_2') \Phi\big(\tilde f(G_1'), \tilde f(G_2')\big)\\
&+ \sum_{(G_1',G_2')\in W} \Pconfig(G_1')Q_c(G_1',G_2') \Phi\big(\tilde f(G_1'), \tilde f(G_2')\big)\\
\end{align*}
where by $W$ we denote the set
$$
W:=\big\{(G_1',G_2'):\;G_1'\sim G_2',\;G_1'\in \categ([0,\cconst]), \;G_2'\notin \categ([0,\infty))\big\}.
$$
In what follows, we estimate each of the terms above.

\medskip

Note that whenever $G_1',G_2'\in \categ([0,\cconst])$ and $G_1'\sim G_2'$, we have
$$
\tilde f(G_1')= \sum_{\underset{G_2\in \SNeigh(G_2')}{G_1\in \SNeigh(G_1')}} \beta_{G_1',G_2'}(G_1,G_2) f(G_1)\quad \text{ and }\quad \tilde f(G_2')= \sum_{\underset{G_2\in\SNeigh(G_2')}{G_1\in \SNeigh(G_1')}}  \beta_{G_1',G_2'}(G_1,G_2) f(G_2).
$$
Since $\Phi$ is convex as a function of two variables, we obtain that for any pair of such graphs $G_1',G_2'$,
$$
\Phi\big(\tilde f(G_1'), \tilde f(G_2')\big)\leq \sum_{\underset{G_2\in\SNeigh(G_2')}{G_1\in \SNeigh(G_1')}}\beta_{G_1',G_2'}(G_1,G_2) \Phi\big(f(G_1),f(G_2)\big).
$$
Using the last inequality and the definition of $\tilde Q_u$, we deduce
\begin{align*}
\frac12&
\sum_{\underset{G_1'\sim G_2'}{G_1',G_2'\in \categ([0,\cconst])}} \Pconfig(G_1')Q_c(G_1',G_2') \Phi\big(\tilde f(G_1'), \tilde f(G_2')\big)\\
&\hspace{1cm}\leq\frac12\sum_{
\substack{G_1',G_2'\in \categ([0,\cconst]):\\G_1'\sim G_2',\\G_1\in \SNeigh(G_1'),
G_2\in \SNeigh(G_2')}
} 
\Pconfig(G_1')Q_c(G_1',G_2')\beta_{G_1',G_2'}(G_1,G_2)\Phi\big(f(G_1),f(G_2)\big)\\
&\hspace{1cm}\leq 2 \sum_{G_1, G_2\in \BipGSet_n(d)} \Psimple(G_1) \tilde Q_u(G_1,G_2)  \Phi\big(f(G_1),f(G_2)\big)= 4\,\Dir_{\tilde Q_u,\Psimple}(f,\log f).
\end{align*}

\medskip

Further, consider the sum
$$
\sum_{\underset{G_1'\sim G_2'}{G_1'\in \categ([0,\cconst]),\,G_2'\in
\categ([\cconst+1,\cconst+2])}} \Pconfig(G_1')Q_c(G_1',G_2') \Phi\big(\tilde f(G_1'), \tilde f(G_2')\big).
$$
Note that any given $G_1'\in \categ([0,\cconst])$ has at most $nd(d-1)^2$ adjacent multigraphs $G_2'\in \categ([\cconst+1,\cconst+2])$. 
Using formulas \eqref{eq: s-neighborhood} and
\eqref{eq: number-neighborhood}, convexity of $\Phi$, and the definition of $Q_c$, we get
\begin{align*}
&\sum_{\substack{G_1'\sim G_2'\\G_1'\in \categ([0,\cconst])\\G_2'\in
\categ([\cconst+1,\cconst+2])}} 
\Pconfig(G_1')Q_c(G_1',G_2') \Phi\big(\tilde f(G_1'), \tilde f(G_2')\big)\\
&\hspace{2cm}\leq \frac{4d}{n}
\sum_{G_1'\in \categ([\cconst-1,\cconst])}
\Pconfig(G_1')\Phi\big(\tilde f(G_1'), 1\big)\\
&\hspace{2cm}\leq \frac{4d}{n}
\sum_{G_1'\in \categ(\cconst-1)}
\frac{\Pconfig(G_1')}{\vert \SNeigh(G_1')\vert} \sum_{G_1\in \SNeigh(G_1')}
\Phi\big(f(G_1), 1\big)\\
&\hspace{2.5cm}+\frac{4d}{n}
\sum_{G_1'\in \categ(\cconst)}
\frac{\Pconfig(G_1')}{\vert \SNeigh(G_1')\vert} \sum_{G_1\in \SNeigh(G_1')}
\Phi\big(f(G_1), 1\big)\\
&\hspace{2cm}\leq\frac{16d}{n}
\frac{(d-1)^{2\cconst}}{(\cconst-1)!}
\sum_{G_1\in \BipGSet_n(d)}
\Phi\big(f(G_1), 1\big)\Psimple(G_1)\\
&\hspace{2cm} \leq \frac{\tilde C_d}{n \log n}\Ent_{\Psimple}\,f,
\end{align*}
where in the last inequality we applied Lemma~\ref{nfapfienfpiwjfnpqifejnpqifjn}, our definition of $\cconst$ and that $n$ is large enough. Here also $\tilde C_d$ is a constant depending only on $d$. 

\medskip

Consider now the sum
$$
\sum_{(G_1',G_2')\in W} \Pconfig(G_1')Q_c(G_1',G_2') \Phi\big(\tilde f(G_1'), \tilde f(G_2')\big).
$$
Note that we always have $Q_c(G_1',G_2') \leq 4n^{-2}$ provided $n$ is large enough. 
Further, $(G_1',G_2')\in W$ only if the graph $G_2'$ is obtained from $G_1'$
by either adding an edge of multiplicity three or introducing a multiedge incident to one
of the existing multiedges in the graph. This implies that for every $G_1'\in \categ([0,\cconst])$,
the number of graphs $G_2'$ such that $(G_1',G_2')\in W$, is at most $c_d'\cconst$ for some constant $c_d'$ depending only on $d$.
Thus, using formulas \eqref{eq: s-neighborhood} and
\eqref{eq: number-neighborhood}, convexity of $\Phi$, and the definition of $\Pconfig$, we can write
\begin{align*}
\sum_{(G_1',G_2')\in W} \Pconfig(G_1')Q_c(G_1',G_2') \Phi\big(\tilde f(G_1'), \tilde f(G_2')\big)
&\leq \frac{4c_d'\cconst}{n^2}\sum_{G_1'\in \categ([0,\cconst])}\Pconfig(G_1')
\Phi\big(\tilde f(G_1'), 1\big)\\
&\leq \frac{8c_d'\cconst}{n^2}\sum_{k=0}^{\cconst}\frac{(d-1)^{2k}}{2^k k!}
\sum_{G_1\in \BipGSet_n(d)}
\Pconfig(G_1)\Phi\big(f(G_1), 1\big)\\
&\leq \frac{16 c_d'\cconst}{n^2}
\sum_{G_1\in \BipGSet_n(d)}
\Phi\big(f(G_1), 1\big)\Psimple(G_1)\\
&\leq \frac{C_d''\cconst}{n^2}\Ent_{\Psimple}\,f,
\end{align*}
where the last inequality follows from Lemma~\ref{nfapfienfpiwjfnpqifejnpqifjn} with some constant $C_d''$ depending only on $d$.

\bigskip

Combining the above estimates, we obtain
$$
\Dir_{\Pconfig}(\tilde f, \log \tilde f)
\leq 4\,\Dir_{\tilde Q,\Psimple}(f,\log f)
+\frac{\tilde c_d}{n\log n}\Ent_{\Psimple}\,f,
$$
whence
$$
\Ent_{\Psimple}(f)\leq C_d'''n \Bigg(\Dir_{\tilde Q,\Psimple}(f,\log f)
+\frac{\tilde c_d}{n\log n}\Ent_{\Psimple}\,f\Bigg),
$$
for some constants $\tilde c_d$ and $C_d'''$ depending only on $d$. 
The result follows.

\end{proof}

\subsection{Proof of Theorem~\ref{th: mlsi-switch}}

The strategy of the proof is to apply the comparison
Theorem~\ref{th: comparison-mlsi} with 
the auxiliary chain $(\Psimple,\tilde Q_u)$ defined in \eqref{eq: def-aux-chain}. 
To this aim, we will use the family of paths introduced in Subsection~\ref{sec: paths} in order to define a $(Q_u, \tilde Q_u)$-flow. In what follows, for every $4$--tuple $\tuple=(G_1,G_1',G_2,G_2')$
such that $G_1',G_2'\in \categ([0,\cconst])$, $G_1'\sim G_2'$, and $G_1\in \SNeigh(G_1')$,
$G_2\in \SNeigh(G_2')$, $G_1\neq G_2$, we write $P_{\tuple}$ for
\begin{itemize}
    \item the [trivial] path of length one from $G_1$ to $G_2$ in the case when $(G_1',G_2')\in \Perfmatch$ and $G_2=\psi_{G_1',G_2'}(G_1)$;
    \item the empty path, when $(G_1',G_2')\in \Perfmatch$ and $G_2\neq\psi_{G_1',G_2'}(G_1)$;
    \item the connection $P_{\tuple}$ from the statement of Proposition~\ref{prop: connections} when $\tuple \in \tupleset$.
\end{itemize}

\begin{defi}
Consider the two Markov generators $Q_u$ and $\tilde Q_u$ on $(\BipGSet_n(d), \Psimple)$ and define $\weight:\, \Gamma(Q_u,\tilde Q_u)\to [0,1]$ as follows. 
Given $G_1, G_2\in \BipGSet_n(d)$ with $\tilde Q_u(G_1,G_2)>0$ and a valid $Q_u$-path $P$ between $G_1$ and $G_2$, we set 
$$\weight(P):=\frac14\sum_{\tuple=(G_1,G_1',G_2,G_2'):\,P=P_{\tuple}} \Pconfig(G_1')Q_c(G_1',G_2')\beta_{G_1',G_2'}(G_1,G_2).$$ 
\end{defi}
In view of the definition of $\tilde Q_u$ in \eqref{eq: def-aux-chain}, for every $G_1, G_2\in \BipGSet_n(d)$ with $\tilde Q_u(G_1,G_2)>0$,
\begin{align*}
\sum_{\substack{P\mbox{\tiny{ valid $Q_u$-path}}\\\mbox{\tiny{between $G_1$ and $G_2$}}}} \weight(P)
&=
\frac14\sum_{\tuple=(G_1,G_1',G_2,G_2')}\Pconfig(G_1')Q_c(G_1',G_2')\beta_{G_1',G_2'}(G_1,G_2)\\
&= \Psimple(G_1) \tilde Q_u(G_1,G_2),
\end{align*}
so that the weight function is indeed a $(Q_u, \tilde Q_u)$-flow. 
In order to make use of Theorem~\ref{th: comparison-mlsi}, we need to calculate a version of the flow congestion.
The following lemma helps in this respect.

\begin{lemma}\label{lem: flow-congestion}
Consider the two Markov generators $Q_u$ and $\tilde Q_u$ on $(\BipGSet_n(d), \Psimple)$, and let 
$\weight$ be the $(Q_u, \tilde Q_u)$-flow defined above. 
Let $H, H'\in \BipGSet_n(d)$ be such that $Q_u(H,H')>0$. Then for any $t\geq 1$, 
$$
\sum_{\underset{(H,H')\in \path}{\path\in \Gamma(Q_u,\tilde Q_u):}} \weight(\path)\big(1+ (\length{\path}-1)^2t\big)
\leq C\big(1+\frac{t}{\sqrt{n}}\big) \Psimple(H) Q_u(H,H'),
$$
for some universal constant $C>0$. 
\end{lemma}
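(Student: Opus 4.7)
The strategy is to decompose the sum over paths $\path\in\Gamma(Q_u,\tilde Q_u)$ passing through the edge $(H,H')$ into two pieces: (i) the length-one paths arising from perfect pair tuples (including the elementary case in which both $G_1',G_2'$ lie in $\categ(0)=\BipGSet_n(d)$), and (ii) the longer paths, namely the connections indexed by $\tupleset$. The elementary inequality $1+(\length{\path}-1)^2 t\leq 2t\,\length{\path}^2$, valid whenever $\length{\path}\geq 1$ and $t\geq 1$, will allow us to absorb both the $1$ and the $(\length{\path}-1)^2 t$ terms uniformly in piece (ii).

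For piece (i), the factor $1+(\length{\path}-1)^2 t$ reduces to $1$, so $t$ plays no role. The second assertion of Proposition~\ref{prop: matching} ensures that for each $G_1'$ with $H\in\SNeigh(G_1')$, at most one $G_2'$ satisfies $(G_1',G_2')\in\Perfmatch$ and $\psi_{G_1',G_2'}(H)=H'$. Since a perfect pair switching acts on edges of multiplicity one, $Q_c(G_1',G_2')=2/(nd(nd-1))$, and hence
\[
\weight((H,H'))\leq \frac{1}{4}\sum_{\substack{G_1'\in\categ([0,\cconst])\\ H\in\SNeigh(G_1')}}\frac{\Pconfig(G_1')}{|\SNeigh(G_1')|}\cdot\frac{2}{nd(nd-1)}.
\]
The configuration model gives $\Pconfig(G_1')=\Pconfig(H)/2^k$ for $G_1'\in\categ(k)$, so combining \eqref{eq: s-neighborhood}, \eqref{eq: number-neighborhood}, \eqref{eq: size-multigraph} with the series $\sum_k (d-1)^{2k}/(2^k k!)\leq e^{(d-1)^2/2}$ collapses the sum to an $O(\Psimple(H)Q_u(H,H'))$ bound.

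For piece (ii), the identity $\beta_{G_1',G_2'}(G_1,G_2)=1/(|\SNeigh(G_1')||\SNeigh(G_2')|)$ on non-perfect tuples implies that the quantity $\sum\weight(\path)\length{\path}^2$ over connections through $(H,H')$ is, up to the factor $1/4$ from the definition of $\weight$, exactly the sum controlled by Proposition~\ref{prop: connections}(ii). This yields an $O(\Psimple(H)Q_u(H,H')/\sqrt{n})$ estimate, and multiplying by $1+t\leq 2t$ produces the claimed $O(t/\sqrt{n})\cdot\Psimple(H)Q_u(H,H')$ contribution. Summing the two pieces finishes the proof.

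The hardest step I anticipate is the bookkeeping in piece (i): one must verify that the elementary $\categ(0)\times\categ(0)$ contributions are correctly absorbed, that Proposition~\ref{prop: matching}(ii) genuinely bounds the multiplicity of tuples mapping to a fixed $(H,H')$, and that the resulting $d$-dependent prefactor admits no hidden $n$-dependence after summing over $k\in\{0,\dots,\cconst\}$. Piece (ii), by contrast, is a nearly immediate consequence of Proposition~\ref{prop: connections}(ii).
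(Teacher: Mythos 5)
Your proof is correct and follows essentially the same route as the paper's: split the flow through $(H,H')$ into the length-one contributions coming from perfect pairs (bounded via Proposition~\ref{prop: matching}(ii) together with \eqref{eq: s-neighborhood}, \eqref{eq: number-neighborhood} and \eqref{eq: size-multigraph}) and the connection contributions indexed by $\tupleset$ (bounded directly by Proposition~\ref{prop: connections}(ii) after absorbing $1+(\length{\path}-1)^2t$ into $t\,\length{\path}^2$). If anything, your bookkeeping is slightly more careful than the paper's, since you explicitly include the $\categ(0)\times\categ(0)$ tuples and keep the factor $\Pconfig(G_1')=\Pconfig(H)/2^k$, which is what makes the summed series $\sum_k (d-1)^{2k}/(2^k k!)$ collapse to a universal constant.
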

\begin{proof}
Denote 
$$
\delta:=\sum_{\underset{(H,H')\in \path}{\path\in \Gamma(Q_u,\tilde Q_u):}} \weight(\path)\big(1+ (\length{\path}-1)^2t\big).
$$
In view of the definition of $\weight$, we have 
$
\delta= \frac12\sum_{\tuple:\, (H,H')\in \path_{\tuple}}\delta_{\tuple}$, where for $\tuple=(G_1,G_1',G_2,G_2')$ we defined 
$$
\delta_\tuple:= \Pconfig(G_1')Q_c(G_1',G_2')\beta_{G_1',G_2'}(G_1,G_2)\big(1+ (\length{\path_\tuple}-1)^2t\big). 
$$
Now note that whenever $(G_1',G_2')\in \Perfmatch$ and $(H,H')\in \path_{\tuple}$, necessarily $G_1=H$, $G_2=H'$, and $\length{\path_\tuple}=1$. 
Therefore, we have 
\begin{align*}
\sum_{\underset{\underset{H'=\psi_{G_1',G_2'}(H)}{(G_1',G_2')\in \Perfmatch}}{\tuple=(H,G_1',H',G_2')}}\delta_\tuple &=\sum_{\substack{(G_1',G_2')\in \Perfmatch\\H\in \SNeigh(G_1'),H'\in \SNeigh(G_2')\\H'=\psi_{G_1',G_2'}(H)}}
\frac{\Pconfig(G_1')Q_c(G_1',G_2')}{\vert \SNeigh(G_1')\vert}\\
& \leq 2e^{-\frac{(d-1)^2}{2}} \Psimple(H) Q_u(H,H')\sum_{
\substack{(G_1',G_2')\in \Perfmatch\\H\in \SNeigh(G_1'),H'\in \SNeigh(G_2')\\H'=\psi_{G_1',G_2'}(H)}}
\frac{1}{\vert \SNeigh(G_1')\vert},
\end{align*}
where we made use of \eqref{eq: size-multigraph}. Now applying the second point of Proposition~\ref{prop: matching}, we deduce that 
$$
\sum_{\underset{\underset{H'=\psi_{G_1',G_2'}(H)}{(G_1',G_2')\in \Perfmatch}}{\tuple=(H,G_1',H',G_2')}}\delta_\tuple\leq 2e^{-\frac{(d-1)^2}{2}} \Psimple(H) Q_u(H,H') \sum_{G_1'\in \categ([1,\cconst]):\, H\in \SNeigh(G_1')}
\frac{1}{\vert \SNeigh(G_1')\vert}.
$$
Making use of \eqref{eq: s-neighborhood} and \eqref{eq: number-neighborhood}, we get that 
$$\sum_{\underset{\underset{H'=\psi_{G_1',G_2'}(H)}{(G_1',G_2')\in \Perfmatch}}{\tuple=(H,G_1',H',G_2')}}\delta_\tuple\leq 4 \Psimple(H) Q_u(H,H').$$ 

On the other hand, using Proposition~\ref{prop: connections}, we have 
$$
\sum_{\tuple\in \tupleset:\, (H,H')\in \path_{\tuple}}\delta_{\tuple}\leq t\sum_{\underset{(H,H')\in \path_\tuple}{\tuple =(G_1,G_1',G_2,G_2') \in \tupleset}} \frac{\Pconfig(G_1') Q_c(G_1',G_2')}{\vert \SNeigh(G_1')\vert\, \vert \SNeigh(G_2')\vert} \length{\path_\tuple}^2\leq C\frac{t\Psimple(H) Q_u(H,H')}{\sqrt{n}},
$$
for some universal constant $C$. Combining the above estimates, we finish the proof. 
\end{proof}

\begin{proof}[Proof of Theorem~\ref{th: mlsi-switch}]
Without loss of generality, $n\geq n_d$ where $n_d$ is taken from Proposition~\ref{prop: auxiliary chain}.
Combining Theorem~\ref{th: comparison-mlsi}, Proposition~\ref{prop: auxiliary chain} and  Lemma~\ref{lem: flow-congestion}, we deduce that 
for any $r\geq e$,  $(\BipGSet_n(d), \Psimple, Q_u)$ satisfies an $r$-regularized Modified log-Sobolev Inequality with a constant $C_d\big(1+\frac{\log r}{\sqrt{n}}\big)n$. Here $C_d>0$ depends only on $d$. 
It remains to apply Theorem~\ref{th: MLSI=reg-MLSI} to finish the proof. 
\end{proof}

\end{document}